\providecommand{\tightlist}{%
  \setlength{\itemsep}{0pt}\setlength{\parskip}{0pt}}
\newcommand{\elbe}{\approxeq}
\newcommand{\biemb}{\approx}
\newcommand{\A}{\mathcal{A}}
\newcommand{\C}{\mathcal{C}}
\newcommand{\B}{\mathcal{B}}
\renewcommand{\S}{\mathcal{S}}
\newcommand{\ol}[1]{\overline{#1}}
\newcommand{\mf}[1]{\mathfrak{#1}}
\newcommand{\mc}[1]{\mathcal{#1}}
\newcommand{\ra}{\rightarrow}
\newcommand{\Ra}{\Rightarrow}
\newcommand{\La}{\Leftarrow}
\newcommand{\LR}{\Leftrightarrow}
\newcommand{\edge}{\rightarrowtail}
\renewcommand{\phi}{\varphi}
\newcommand{\I}{\MakeUppercase{\romannumeral 1}}
\newcommand{\II}{\MakeUppercase{\romannumeral 2}}
\DeclareMathOperator{\emb}{\hookrightarrow}
\DeclareMathOperator{\eemb}{\preccurlyeq}
\DeclareMathOperator{\ar}{\leadsto}
\declaretheorem[name={Theorem}]{theorem}
\declaretheorem[name={Proposition}, sibling=theorem]{proposition}
\declaretheorem[name={Lemma}, sibling=theorem]{lemma}
\declaretheorem[name={Definition}, style=definition,sibling=theorem]{definition}
\declaretheorem[name={Corollary}, sibling=theorem]{corollary}
\declaretheorem[name={Question}]{question}
\newcommand\addrand{{\normalfont and}}   
\newcommand{\twoaddress}[4][]{\g@addto@macro\addresses{%
\address{#1}{#2 \protect\advance\parindent by -#4em\protect\\\addrand %
\protect\\ #3}}}
\title{Degree spectra of analytic complete equivalence relations}
\author{Dino Rossegger}
\date{\today}
\address{Department of Mathematics, University of California, Berkeley
\addrand\ Institute of Discrete Mathematics and Geometry, Technische Universit\"at Wien}
\email{dino@math.berkeley.edu}
\thanks{The author thanks David Marker for providing references on minimality.
The results in this paper were obtained while the author was a postdoctoral
fellow at the Department of Pure Mathematics of the University of Waterloo. The
author was partially supported by a Marie Sk\l{}odowska-Curie Action under
grant agreement No. 101026834 — ACOSE}
\keywords{Borel reducibility, degree spectra, computable functors,
bi-embeddability, equivalence relation}
\subjclass{03C75, 03D45,03E15}
\begin{document}
\maketitle
\begin{abstract}
  We study the bi-embeddability and elementary bi-embeddability relation on
  graphs under Borel reducibility and investigate the degree spectra realized
  by these relations. We first give a Borel reduction from embeddability on
  graphs to elementary embeddability on graphs. As a consequence we obtain that
  elementary bi-embeddability on graphs is a $\pmb \Sigma^1_1$ complete
  equivalence relation. We then investigate the algorithmic properties of
  this reduction. We obtain that elementary bi-embeddability on the class of
  computable graphs is $\Sigma^1_1$ complete with respect to computable
  reducibility and show that the elementary bi-embeddability and
  bi-embeddability spectra realized by graphs are related. 
\end{abstract}
\section{Introduction}\label{introduction}
Equivalence relations on countable structures are among the most heavily
studied objects in descriptive set theory and computability theory. In
descriptive set theory, starting with the work of Friedman and
Stanley~\cite{friedman1989}, the complexity of equivalence relations on spaces
of structures under Borel reducibility has seen much interest by experts,
culminating in results by Louveau and Rosendal~\cite{louveau2005}, who showed
that, among others, the bi-embeddability relation on graphs is $\pmb
\Sigma^1_1$-complete. Since then there has been a constant stream of work on
the complexity of the bi-embeddability relation, both on other classes of
structures, see for instance~\cite{calderoni2019}, and refinements of completeness
notions, e.g. in~\cite{camerlo2013}.

Equivalence relations are also one of the main objects of study in
computability theory. Here, the equivalence relations are usually on the set of natural
numbers and their complexity is established using computable
reducibility. Identifying a computable structure
with the index of the algorithm computing it, one can obtain completeness
results like the ones in descriptive set theory for equivalence relations on
computable structures~\cite{fokina2009,fokina2012}. One object of
study in computable structure theory which also takes non-computable structures
into account are degree spectra of structures, introduced by
Knight~\cite{knight1986}. The degree spectrum of
a given structure is the set of sets of
natural numbers Turing equivalent to one of its isomorphic copies. 
They provide a measure of the algorithmic complexity of countable structures.

Recently, researchers initiated the study of degree spectra with respect to
other model theoretic equivalence relations such as
bi-embeddability~\cite{fokina2019a}, elementary
bi-embeddability~\cite{rossegger2018}, elementary
equivalence~\cite{andrews2015,andrews2017,andrews2013a}, or $\Sigma_n$
equivalence~\cite{fokina2016}. One of the main goals in this line of research
is to distinguish these equivalence relations with respect to the degree
spectra they realize. While for elementary equivalence and $\Sigma_n$ equivalence
examples that separate them from each other and from isomorphism and elementary
bi-embeddability are known, so far all attempts to separate
isomorphism, bi-embeddability and elementary bi-embeddability have been
unsuccessful.

There seem to be various reasons for this. That we can separate elementary
equivalence and $\Sigma_n$ equivalence is the case because they have different
levels in the Borel hierarchy while isomorphism and bi-embeddability are not even
Borel. On the other hand bi-embeddability preserves very little structural
properties and it is thus difficult to construct interesting examples.
The aim of this article is to investigate the relationship between the degree
spectra realized by the bi-embeddability relation and by the elementary
bi-embeddability relation. First, we establish that elementary
bi-embeddability on graphs is $\pmb \Sigma^1_1$ complete with respect to Borel
reducibility. We then proceed to establish a relationship between the degree spectra realized
by the bi-embeddability and elementary bi-embeddability relation on graphs. Our
main results are as follows.
\begin{theorem}\label{thm:elbeborel}
  The elementary embeddability relation on graphs $\eemb_\mf G$ is a complete $\pmb
  \Sigma^1_1$ quasi-order. In particular,
  the elementary bi-embeddability relation on graphs $\elbe_\mf G$ is a complete $\pmb \Sigma^1_1$ equivalence relation.
\end{theorem}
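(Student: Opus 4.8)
The plan is to exhibit a Borel reduction from the embeddability quasi-order on graphs to the elementary embeddability quasi-order $\eemb_\mf G$, and to read off the equivalence-relation clause as a formal consequence. By the results of Louveau and Rosendal~\cite{louveau2005}, embeddability on graphs, which I write $\emb_\mf G$, is already a complete $\pmb \Sigma^1_1$ quasi-order, so it suffices to produce a Borel map $\Phi$ on codes for graphs such that $G \emb H$ if and only if $\Phi(G) \eemb \Phi(H)$. The upper bound is the routine half: for a fixed finite relational language, the assertion that a function $f \in \omega^\omega$ is an elementary embedding of one countable structure into another is arithmetic in $f$ and the two structures, since one checks preservation of each first-order formula, so existentially quantifying over $f$ shows $\eemb_\mf G$ is $\pmb \Sigma^1_1$. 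Granting the reduction, the final clause then follows with no extra work: a Borel reduction of quasi-orders is automatically a Borel reduction of the induced symmetrizations, and the symmetrization of $\emb_\mf G$ is bi-embeddability, which Louveau and Rosendal proved to be a complete $\pmb \Sigma^1_1$ equivalence relation; hence $\Phi$ simultaneously witnesses completeness of $\elbe_\mf G$.

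The heart of the argument is the construction of $\Phi$, which must satisfy two opposing demands. For the forward direction, every graph embedding $G \emb H$ has to lift to a full elementary embedding $\Phi(G) \eemb \Phi(H)$; this pushes toward making $\Phi(G)$ sufficiently homogeneous, so that a map respecting the coded graph automatically preserves every first-order formula. For the backward direction, every elementary embedding $\Phi(G) \eemb \Phi(H)$ must descend to a graph embedding $G \emb H$; this pushes toward making $\Phi(G)$ rigid enough that the vertex set and edge relation of $G$ are recoverable from, and necessarily preserved by, the first-order structure. The strategy is therefore to encode $G$ into a model of a carefully chosen theory in which definability pins down the combinatorics: the vertices should sit inside a (algebraically) definable set, the edge relation should be first-order definable over that set, and the theory should be arranged so that the only elementary self-maps and embeddings are those induced by graph morphisms. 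This is where minimality enters: choosing the coding gadgets to be minimal, so that the algebraic closure operation has no room to move points except as dictated by the encoded graph, is what forces an abstract elementary embedding to carry the copy of $G$ into the copy of $H$ as a graph embedding.

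I expect the backward direction to be the main obstacle. Elementary embeddings are far more constrained than plain embeddings, so the danger is twofold: either the coding is too loose, and there are spurious elementary embeddings of $\Phi(G)$ into $\Phi(H)$ with no underlying graph embedding, collapsing the reduction; or it is too rigid, and genuine graph embeddings fail to extend, breaking the forward direction. Threading between these requires a careful analysis of the complete types realized in $\Phi(G)$ and of which configurations are algebraic, and the minimality of the gadgets is the tool that guarantees the definable closure of the coded vertices is exactly the coded graph, with nothing extra to preserve and nothing missing. Once the types and their extensions are understood, both directions should reduce to tracking how an elementary map acts on the distinguished definable sets, and the Borelness of $\Phi$ will be immediate from the explicit, level-by-level nature of the construction.
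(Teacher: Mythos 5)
Your plan follows the paper's broad outline (reduce from $\emb_\mf G$ using Louveau--Rosendal, with minimal gadgets coding edges, and read off the equivalence-relation clause formally), but one of its central prescriptions would actually make the reduction fail, and the real work is missing. You ask that ``the edge relation should be first-order definable over'' the coded vertex set. This is incompatible with the forward direction you also need: if a formula $\epsilon(x,y)$ uniformly defined the coded edge relation (with the vertex set cut out by another formula), then the sentence ``there exist two coded vertices satisfying $\epsilon$'' would transfer along any elementary embedding; but the infinite edgeless graph embeds into the infinite complete graph, so $\Phi(\mathrm{edgeless})\eemb\Phi(\mathrm{complete})$ would force the edgeless code to satisfy that sentence --- a contradiction. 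So edges cannot be decoded by first-order definability. The paper threads this needle differently: an edge/non-edge between coded vertices is recorded by which of two gadget structures $\mc S_0$, $\mc S_1$ is attached, where $\mc S_0\equiv \mc S_1$ (elementarily equivalent, so first-order logic cannot see which gadget is present --- this is exactly what lets the forward direction go through, proved by an Ehrenfeucht--Fra\"iss\'e game), yet $\mc S_0\not\cong\mc S_1$ and both are \emph{minimal}. Minimality then gives the backward direction: the restriction of an elementary embedding to a gadget is an elementary embedding of gadgets (relativize quantifiers), its image is an elementary substructure of the target gadget, hence by minimality equals the whole target gadget; so gadgets map onto gadgets isomorphically and edges decode to edges. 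The decoding is thus by isomorphism type --- a $\Sigma^c_2$, non-elementary property --- not by definability, and this idea is absent from your proposal.

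Two further gaps remain even granting the corrected mechanism. First, the existence of the gadgets --- two non-isomorphic, elementarily equivalent, minimal countable structures --- is a genuine model-theoretic fact you never establish; the paper obtains them from Shelah's structure on $2^\omega$ with the functions $F_\nu$ and predicates $R_\nu$, proving quantifier elimination and taking the substructures generated by the constant strings $\bar 0$ and $\bar 1$. Second, the gadget coding does not land in graphs but in a richer relational language (with predicates $W$, $O$, $N$, $R_\nu$, $graph_{F_\nu}$), so to conclude completeness of $\eemb_\mf G$ one still needs a second reduction from elementary embeddability on that class into elementary embeddability on graphs; proving that the standard structure-to-graph coding preserves elementary embeddability in \emph{both} directions is itself nontrivial (Ehrenfeucht--Fra\"iss\'e games plus Tarski--Vaught in one direction, definability in the graph of the sets coding each relation in the other). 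Your $\pmb\Sigma^1_1$ upper bound and the observation that the bi-embeddability clause follows formally from the quasi-order reduction are both correct and agree with the paper.
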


As a corollary of \cref{thm:elbeborel} we obtain the corresponding result for
elementary bi-embeddability on computable structures.
\begin{theorem}\label{thm:elbecomp}
  The elementary embeddability relation on the class of computable graphs is
  a $\Sigma^1_1$ complete quasi-order with respect to computable reducibility.
  In particular, the elementary bi-embeddability relation on computable graphs is
  a $\Sigma^1_1$ complete equivalence relation with respect to computable reducibility.
\end{theorem}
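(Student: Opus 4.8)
The plan is to derive \cref{thm:elbecomp} as a corollary of \cref{thm:elbeborel}, exploiting the fact that the Borel reduction furnished by that theorem can be made effective. Let me sketch how the reduction from embeddability to elementary embeddability should transfer from the Borel setting to the computable setting.

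First I would recall the general principle linking Borel completeness with computable completeness for analytic equivalence relations and quasi-orders. The key observation is that completeness for $\pmb\Sigma^1_1$ quasi-orders under Borel reducibility is witnessed by a single reduction map, and what matters for the computable version is that an analogous map be computable (or at least uniformly computable on indices). So the strategy is: take the reduction constructed in the proof of \cref{thm:elbeborel}, which sends a graph $\mc G$ to a graph realizing elementary embeddability in a way that mirrors ordinary embeddability, and verify that this construction is given by a computable operator. Concretely, I would check that from an index for a computable graph $\mc G$ one can uniformly compute an index for the image graph, and that the construction is sufficiently local/finitary that it commutes with the passage to computable structures.

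The core step is therefore to observe that the class of graphs, equipped with embeddability, is already known to be $\Sigma^1_1$ complete under computable reducibility (this is the computable analogue of the Louveau--Rosendal result, and such effectivizations are standard in the computable structure theory literature), and then to compose that with the effective version of the reduction in \cref{thm:elbeborel}. Since a composition of two computable reductions is again a computable reduction, the completeness of elementary embeddability under computable reducibility follows immediately once the reduction of \cref{thm:elbeborel} is seen to be computable. The ``in particular'' clause then follows by the same symmetrization argument used in \cref{thm:elbeborel}: a reduction witnessing completeness of the quasi-order, when applied to both directions, witnesses completeness of the associated equivalence relation $\elbe_\mf G$.

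The main obstacle I anticipate is purely bookkeeping rather than conceptual: one must confirm that every ingredient of the reduction in \cref{thm:elbeborel} — in particular the coding of embeddings as elementary embeddings — is defined by finitely much first-order (or otherwise effective) data attached to each vertex and edge, so that the resulting map on structures is computable uniformly in an index rather than merely Borel. If the proof of \cref{thm:elbeborel} proceeds by an explicit, vertex-by-vertex construction (which is typical for such graph codings), then this verification is routine and amounts to remarking that the construction is ``effective'' or ``uniformly computable.'' I would thus present the proof as a short remark that the reduction is computable, invoke the computable $\Sigma^1_1$-completeness of embeddability on graphs, and conclude by composition.
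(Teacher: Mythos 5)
Your proposal is correct and takes essentially the same route as the paper: the paper observes that the reductions $f$ and $g$ built for \cref{thm:elbeborel} are not merely Borel but computable, and obtains \cref{thm:elbecomp} by composing them with the known $\Sigma^1_1$-completeness of (bi-)embeddability on graphs under computable reducibility (due to Fokina and Friedman), which is exactly your strategy. The only difference is cosmetic: the paper cites a specific prior result for the computable completeness of embeddability on graphs rather than appealing to a generic effectivization of Louveau--Rosendal.
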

The following result establishes a relationship between bi-embeddability
spectyra of graphs and elementary bi-embeddability spectra of graphs. 
\begin{theorem}\label{thm:dgsp}
  Let $\mc G$ be an automorphically non-trivial graph, then there is a graph $\hat{\mc G}$ such that
  \[ DgSp_\elbe(\hat{\mc{G}})=\{X: X'\in DgSp_\biemb({\mc G})\}.\]
\end{theorem}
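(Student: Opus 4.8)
The plan is to take $\hat{\mc G}$ to be the graph $\Phi(\mc G)$ produced by the Borel reduction $\Phi$ from embeddability to elementary embeddability built in the proof of \Cref{thm:elbeborel}, and to read off from that construction the exact computational cost of decoding. Recall that $\Phi$ comes equipped with a fixed first-order formula $\psi(x,y)$ that defines inside $\Phi(G)$ an isomorphic copy of the adjacency relation of $G$, and that $G_1 \emb G_2 \iff \Phi(G_1) \eemb \Phi(G_2)$, hence $G_1 \biemb G_2 \iff \Phi(G_1) \elbe \Phi(G_2)$. The key point for the present theorem is that the gadgets forcing elementary embeddings to respect adjacency make $\psi$ a formula with a single unbounded block of quantifiers, so that for \emph{any} graph $H$ the decoded graph $\mathrm{dec}(H)$ obtained by evaluating $\psi$ in $H$ satisfies $\mathrm{dec}(H) \le_T H'$. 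I will also use throughout that, by Knight's theorem, the isomorphism spectrum of every infinite graph is closed upward under $\le_T$, and hence so are $DgSp_\biemb(\mc G)$ and $DgSp_\elbe(\hat{\mc G})$, each being a union of isomorphism spectra.

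For the forward inclusion I would argue as follows. Let $X \in DgSp_\elbe(\hat{\mc G})$, witnessed by a graph $H$ with $\deg H = X$ and $H \elbe \Phi(\mc G)$. The robustness of the reduction gives $\mathrm{dec}(H) \biemb \mc G$, so $\deg(\mathrm{dec}(H)) \in DgSp_\biemb(\mc G)$, and by the previous paragraph $\deg(\mathrm{dec}(H)) \le_T X'$. Upward closure of $DgSp_\biemb(\mc G)$ then yields $X' \in DgSp_\biemb(\mc G)$, which is exactly what is required. That every $H$ elementarily bi-embeddable with $\Phi(\mc G)$---not only the literal images $\Phi(B)$---decodes to a graph bi-embeddable with $\mc G$ is the one nontrivial input here, and I expect it to follow from the same analysis of definable types that underlies \Cref{thm:elbeborel}.

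The backward inclusion carries the jump inversion and is the step I expect to be the main obstacle. Given $X$ with $X' \in DgSp_\biemb(\mc G)$, fix a graph $B \biemb \mc G$ with $B \le_T X'$; then $B$ has an $X$-computable approximation in the sense of a $\Delta^0_2(X)$ presentation of its adjacency. The plan is to build an $X$-computable graph $H$ by laying down the $\Phi$-gadgets $X$-computably while coding this approximation into precisely those gadgets whose stable configuration is read by $\psi$, so that $\mathrm{dec}(H) = B$ while $H \le_T X$. Since $\mathrm{dec}(H) = B \biemb \mc G$, the structural properties of the reduction should give $H \elbe \Phi(\mc G)$, whence $\deg H \in DgSp_\elbe(\hat{\mc G})$; upward closure of the latter, together with $\deg H \le_T X$, then places $X$ in $DgSp_\elbe(\hat{\mc G})$.

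The technical crux is to make a single family of gadgets serve both roles at once: the definition $\psi$ must be robust enough that decoding costs exactly one jump for \emph{every} member of the elementary-bi-embeddability class (needed forward), while being shaped as a limit of an $X$-computable approximation so that a $\Delta^0_2(X)$ graph can be coded with no jump (needed backward). Verifying that the approximation-coded copy $H$---which need not lie in the range of $\Phi$---is genuinely elementarily bi-embeddable with $\Phi(\mc G)$ is the heart of the matter and will require pushing the argument of \Cref{thm:elbeborel} through for arbitrary $H$ in the class; the remaining degree-theoretic bookkeeping and the uniformity of $G \mapsto \Phi(G)$ are routine by comparison.
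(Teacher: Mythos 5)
Your overall architecture---decode forward at the cost of one jump, jump-invert backward by coding a $\Delta^0_2(X)$ approximation into the gadgets---is the same as the paper's, but the proposal rests on a false premise and defers exactly the two lemmas that constitute the paper's actual work. The false premise is that adjacency in $\mc G$ is defined inside $\Phi(\mc G)$ by a \emph{first-order} formula $\psi(x,y)$. It is not, and cannot be: the construction codes edges and non-edges by the two Shelah models $\mc S_0$ and $\mc S_1$, which are elementarily \emph{equivalent}, so no first-order formula (even with the two vertices as parameters) distinguishes an edge-gadget from a non-edge-gadget. Worse, if such a $\psi$ existed the reduction itself would fail: for $\mc G_1$ a single vertex and $\mc G_2$ a single edge we have $\mc G_1\emb\mc G_2$, hence $\Phi(\mc G_1)\eemb\Phi(\mc G_2)$ and in particular $\Phi(\mc G_1)\equiv\Phi(\mc G_2)$, yet $\Phi(\mc G_2)$ would satisfy $\exists x\exists y\,(W(x)\wedge W(y)\wedge \psi(x,y))$ while $\Phi(\mc G_1)$ would not. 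What is true is that adjacency is defined by a computable infinitary $\Sigma^c_2$ formula (``some element of the $O$-section realizes the type of $\bar 0$''), i.e., it is determined by the \emph{isomorphism type} of the gadget; this is simultaneously why decoding costs one jump and why the reduction can work at all.

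The two steps you flag as expected to follow ``from the same analysis'' as \cref{thm:elbeborel} do not follow from it: \cref{lem:redupreserveemb,lem:redupreserveeemb} only treat literal images of the reduction, never arbitrary members of an $\elbe$-class. (a) Forward, to decode an arbitrary $H\elbe\Phi(\mc G)$ you must know that its gadgets are again copies of $\mc S_0$ or $\mc S_1$; the paper gets this from \emph{minimality} (\cref{def:minimality}): each gadget of $H$ elementarily embeds into a gadget of $\Phi(\mc G)$, and since $\mc S_0,\mc S_1$ have no proper elementary substructures that embedding must be onto. This is the reason Shelah's minimal models were chosen, and it is the engine of \cref{lem:relationshipspectra}. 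Moreover, since you work directly in graphs (the image of $g\circ f$) rather than in the intermediate class $\mf C$, you additionally need the argument of \cref{thm:cbfcomplete} that elementary substructures at the graph level retain the cycle-and-chain coding apparatus and decode \emph{computably} (with no extra jump) back into $\mf C$. (b) Backward, the claim that a $\Delta^0_2(X)$ adjacency relation can be realized by $X$-computable gadgets is precisely \cref{lem:jumpinv}, and it depends on a specific feature of the Shelah structures: one builds a single generator $a$, declares $R_\nu(a)$ according to the current guess, and finitely many mind-changes are harmless because the substructure of $\mc S$ generated by any string with the correct eventual behaviour is the same model. Without proofs of (a) and (b), and with the definability claim corrected from first-order to $\Sigma^c_2$, your text is an outline of the paper's proof rather than a proof. (A smaller shared caveat: upward closure of spectra via Knight's theorem requires automorphic nontriviality, not mere infiniteness.)
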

Note that in \cref{thm:dgsp} we deal only with automorphically non-trivial
graphs. This might seem like a shortcoming, however automorphically trivial
structures are not interesting from a computability theoretic point of view. In
particular, every structure bi-embeddable with an automorphically trivial graph is
computable and thus both its bi-embeddability spectrum and elementary
bi-embeddability spectrum is the set of all computable sets.

The proofs of \cref{thm:elbeborel,thm:elbecomp} are the topic of
\cref{sec:analcomplete}. In \cref{sec:degreespectra} we build on these results to
prove \cref{thm:dgsp}. In \cref{sec:background} we give the necessary
background and definitions.
\section{Background}\label{sec:background}
Our definitions follow for the most part~\cite{gao2008}
and~\cite{montalban2021a}. We assume that all structures have universe $\omega$ and are 
relational.
Let $\mc L$ be a relational language $(R_i)_{i\in\omega}$ where without loss of
generality $R_i$ has arity $i$. Then each element $\mc A$ of $Mod(\mc L)$ can be
viewed as an element of the product space
\[ X_\mc L=\prod_{i\in \omega} 2^{\omega^{i}}\]
and thus $Mod(\mc L)$ becomes a compact Polish space on which we can define the
Borel and projective hierarchy in the usual way.

Let $\mc A$ be an $\mc L$-structure and $(\phi_i^{at})_{i\in\omega}$ be
a computable enumeration of the atomic $\mc L$-sentences with variables in
$\{x_1,x_2,\dots\}$. The \emph{atomic diagram $D(\mc A)$} of $\mc A$ is the
element of Cantor space defined by
\[ D(\A)(i)=\begin{cases} 1 & \text{ if } \A\models \phi_i^{at}[x_j\ra
j : j\in\omega]\\
0 &\text{ otherwise.}\end{cases}\]
The Turing degree of a structure $\A$ is the degree of $D(\A)$. We will in
general not distinguish between a structure as an element of $Mod(\mc L)$ and its
atomic diagram and assume that what is meant is clear from the context.

Variations of the following definition were independently
suggested in~\cite{montalban2015a,fokina2016,yu2015}.
\begin{definition}
  Let $E$ be an equivalence relation on $Mod(\mc L)$ and $\A\in Mod(\mc L)$. Then the
  \emph{degree spectrum of $\A$ with respect to $E$}, or, short $E$-spectrum of
  $\A$, is the set
  \[ DgSp_E(\A)=\{ X: \exists \B \mathop{E} \A\ D(\B)\equiv_T X\}\]
\end{definition}
We write $\A\emb\B$ to say that $\A$ is \emph{embeddable} in $\B$, and
$\A\biemb \B$ to say that $\A$ is \emph{bi-embeddable} with $\B$, i.e.,
$\A\emb\B$ and $\B\emb \A$.
Further, we write $\A\eemb \B$ to say that $\A$ is \emph{elementary embeddable} in
$\B$ and $\A\elbe \B$ to say that $\A$ is \emph{elementary bi-embeddable} with $\B$,
i.e., $\A\eemb \B$ and $\B\eemb \A$.

\begin{definition}
  Let $R,S$ be binary relations on a set $X$. The relation $R$ is
  \emph{reducible} to $S$ if there is a function $f: X\ra X$ such that for all
  $x,y\in X$
  \[ x R y \LR f(x)Sf(y).\]
  Assume $X=Mod(\mc L)$. Then 
  \begin{enumerate}
    \item $R$ is \emph{Borel reducible} to $S$ if $f$ is Borel on $Mod(\mc
      L)\times Mod(\mc L) $,
    \item $R$ is \emph{computably reducible} to $S$ if there is a computable
      operator $\Phi$ such that for all $\A\in Mod(\mc L)$,
      $\Phi^{D(\A)}=D(f(\A))$.
  \end{enumerate}
  Assume $X$ is $\omega$ and that $(\A_i)_{i\in\omega}$ is a computable
  enumeration of all partial computable $\mc L$ structures. Then $R$ is \emph{computably
  reducible} to $S$ if $f$ is a computable function.
\end{definition}
We say that an equivalence relation (quasi-order) $R\in \Gamma$ is a $\Gamma$ complete equivalence relation (quasi-order) for a complexity class $\Gamma$
with respect to $x$-reducibility if all equivalence relations (quasi-orders) in $\Gamma$ are $x$-reducible to
$R$.

A standard reference on Borel reducibility is~\cite{gao2008}. 
Computable reducibility on the natural numbers can be seen as a natural
effectivization of Borel reducibility where one only considers computable
structures. Fokina and
Friedman~\cite{fokina2009} showed that bi-embeddability on trees and thus also
graphs is $\Sigma^1_1$
complete with respect to computable reducibility, and in~\cite{fokina2012} it
is shown that isomorphism on graphs is $\Sigma^1_1$ complete with respect to
computable reducibility. This contrasts with Borel reducibility; it is
well known that isomorphism on graphs is not $\pmb \Sigma^1_1$ complete.

\section{Elementary bi-embeddability is analytic complete}\label{sec:analcomplete}
In this section we prove \cref{thm:elbeborel,thm:elbecomp} and some lemmas needed for
\cref{thm:dgsp}. The section is structured as follows. In
\cref{sec:reduemb_geemb_c} we give a reduction from embeddability on the class
of graphs $\mf G$ to elementary embeddability on a Borel class $\mf C$ of
structures in an infinite relational language. In \cref{sec:graphscomplete} we
show that graphs are complete for elementary embeddability. That is, for
every Borel class, elementary embeddability on this class can be reduced to
elementary embeddability on graphs. \cref{thm:elbeborel} then follows by
composing the reductions given in
\cref{sec:reduemb_geemb_c,sec:graphscomplete}. For \cref{thm:elbecomp} we need
a few more observations made at the end of this section.
\subsection{The reduction from \(\emb_\mf G\) to
\(\eemb_{\mf C}\)}\label{sec:reduemb_geemb_c}

The main idea of the construction is that for any given
graph \(\mc G\) we replace the edge relation with structures having the
property that they are minimal under elementary embeddability.

\begin{definition}\label{def:minimality}
A structure \(\A\) is \emph{minimal} if it does not have proper
elementary substructures.
\end{definition}

Minimal structures were investigated by Fuhrken~\cite{fuhrken1966} who
showed that there is a theory with \(2^{\aleph_0}\) minimal models, and
Shelah~\cite{shelah1978} who showed that for every \(n\leq \aleph_0\),
there is a theory with \(n\) minimal models. Later,
Ikeda~\cite{ikeda1993} investigated minimal models of minimal theories.
Notice that a prime model is not necessarily minimal, as it might
contain elementary substructures isomorphic to itself.

Given a graph \(\mc G\), if \(x,y\in G\) and \(xEy\), then we
associate a copy of a structure \(\A\) with the pair \((x,y)\) and
otherwise we associate a copy \(\B\) with \((x,y)\). The
structures \(\A\) and \(\B\) will be elementary equivalent and
minimal.

Before we formally state the reduction let us describe \(\A\) and \(\B\). They
will be models of the theory of the following structure studied by
Shelah~\cite{shelah1978}. The language of the theory contains countably
many unary functions \(F_\nu\) and unary relation symbols \(R_\nu\), one
for each \(\nu\in 2^{<\omega}\). Consider the structure
\[ \mc S=(2^\omega, \langle F_\nu\rangle_{\nu\in 2^{<\omega}}, \langle R_\nu\rangle_{\nu\in 2^{<\omega}})\]
where \(F_\nu\) is defined by
\(F_\nu(\sigma)(x)=\sigma(x)+\nu(x)\mod 2\) where we assume that
\(\nu(x)=0\) for \(x\geq|\nu|\) and \(R_\nu(\sigma)\) if and only if
\(\nu\prec \sigma\). 
Shelah showed that the theory of $\mc
S$ has quantifier elimination and that each element of $\mc S$ generates an
elementary substructure that is minimal.

Let $\hat{\mc{S}_0}$ be the substructure of
$\mc S$ generated by \(\bar 0\), the constant string of \(0\)'s and
$\hat{\mc{S}_1}$ be the substructure generated by \(\bar 1\), the constant
string of \(1\)'s.
These structures are countable and by Shelah's argument, 
\(\hat{\mc{S}}_0\equiv \hat{\mc S}_1\equiv \mc S\).  Furthermore, \(\hat{\mc S}_0\) and \(\hat{\mc S}_1 \) are
minimal models of $Th(\mc S)$. To see this, let \(x\in \hat S_0\), then \(x=F_\nu(\bar 0)\) and
in particular, \(\bar 0=F_\nu(x)\) for some $\nu\in 2^{<\omega}$. So, the substructure of \(\hat{\mc{S}}_0\)
generated by \(x\) is already \(\hat{\mc{S}}_0\).

As we require our structures in \(\mf C\) to be of relational syntax we
will let \(\mc S_0\) and \(\mc S_1\) be the structures corresponding to
\(\hat{\mc S}_0\), respectively \(\hat{\mc S}_1\), after we replace each
\(F_\nu^{\mc S_i}\)
by its graph \(graph_{F_\nu}^{\mc S_i}=\{(\sigma,F_\nu^{\mc
S_i}(\sigma)):\sigma\in S_i\}\). We may assume without loss of generality
that the universes of \(\mc S_0\) and \(\mc S_1\) are \(\omega\) and let
$\A=\mc S_0$ and $\B=\mc S_1$.

Let us describe the structures in the class \(\mf C\) more formally. The
class of structures \(\mf C\) consists of all countable structures with
universe \(\omega\) in the language consisting of a unary relation
\(W\), binary relations \(R_\nu\) and \(graph_{F_\nu}\) for all
\(\nu\in 2^{<\omega}\), and a ternary relation \(O\). We are now ready
to give the function \(f: \mf G \ra \mf C\) witnessing the reduction.

We formally describe how to obtain a structure in \(\mf C\) given a graph.
Let \(\mc G\) be a graph and partition \(\omega\) into countably many
infinite, coinfinite subsets \((A_i)_{i\in\omega}\). Then

\begin{itemize}
\tightlist
\item
  for every \(a_i\in A_0\), \(W^{f(\mc G)}(a_i)\) (we will call elements
  of \(A_0\) the vertices of \(f(\mc G)\)),
\item
  for every \(m,n\in\omega\), if \(mEn\), then for all $\nu\in 2^{<\omega}$ define
  \(R_\nu^{f(\mc G)}\) and \({graph_{F_\nu}}^{f(\mc G)}\) on
  \(A_{\langle m,n\rangle+1}\) such that
  \((A_{\langle m,n\rangle+1},\langle{graph_{F_\nu}}^{f(\mc G)}\rangle_{\nu\in
  2^{<\omega}},R_\nu^{f(\mc G)})\cong \mc S_0\),
\item
  for every \(m,n\in\omega\), if \(\neg mEn\), then for all $\nu\in
  2^{<\omega}$ define 
  \(R_\nu^{f(\mc G)}\) and \({graph_{F_\nu}}^{f(\mc G)}\) on
  \(A_{\langle m,n\rangle+1}\) such that
  \((A_{\langle m,n\rangle+1},\langle{graph_{F_\nu}}^{f(\mc G)}\rangle_{\nu\in
  2^{<\omega}},R_\nu^{f(\mc G)})\cong \mc S_1 \),
\item
  for every \(m,n\in\omega\), let \(O^{f(\mc G)}(a_m,a_n,j)\) for
  all \(j\in A_{\langle m,n\rangle +1}\).
\end{itemize}
This finishes the construction of \(f(\mc G)\). We will refer to the
substructure on the elements in $A_{\langle m,n\rangle+1}$ as the
substructure associated to the pair $(a_m,a_n)$ and to \((A_{\langle m,n\rangle+1},\langle{graph_{F_\nu}}^{f(\mc G)}\rangle_{\nu\in
2^{<\omega}},R_\nu^{f(\mc G)})\) as $\mc S_{(a_m,a_n)}$. 

It is easy to see that
the function \(f\) so defined is Borel, indeed it is even computable.
To see that \(f\)
is a reduction from \(\emb_\mf G\) to \(\eemb_\mf C\) it remains to
prove the following.
\begin{lemma}\label{lem:redupreserveemb}
For \(\mc G,\mc H\in \mf G\), \(\mc G\emb \mc H\) if and only if
\(f(\mc G)\eemb f(\mc H)\).
\end{lemma}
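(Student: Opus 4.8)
The plan is to prove the two directions separately, and the whole argument rests on two complementary properties of the gadgets $\A=\mc S_0$ and $\B=\mc S_1$: they are elementarily equivalent (both have quantifier elimination as minimal models of $Th(\mc S)$), yet they are not isomorphic, and both are minimal. First I would record the non-isomorphism: in $\hat{\mc S}_0$ the generator $\bar 0$ satisfies $R_{0^k}$ for every $k$, whereas no element of the eventually-$1$ structure $\hat{\mc S}_1$ can satisfy $R_{0^k}$ for all $k$; since an isomorphism preserves each $R_\nu$, there is no isomorphism $\A\to\B$, and this passes to the relational versions. Combined with minimality this gives the \emph{rigidity fact} I will use repeatedly: there is no elementary embedding of $\A$ into $\B$ nor of $\B$ into $\A$, because the image of such an embedding would be an elementary substructure of a minimal structure, hence all of it, making the embedding an isomorphism and contradicting $\A\not\cong\B$.

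For the backward direction, suppose $h\colon f(\mc G)\eemb f(\mc H)$. Being elementary, $h$ preserves $W$, so it carries vertices to vertices and, being injective, induces an injection $g$ on vertex indices by $g(m)=m'$ whenever $h(a_m)=a_{m'}$. The block $A_{\langle m,n\rangle+1}$ is definable with parameters $a_m,a_n$ as $\{j:O(a_m,a_n,j)\}$, so $h$ maps it into $\{k:O(a_{g(m)},a_{g(n)},k)\}=A_{\langle g(m),g(n)\rangle+1}$. Relativizing all quantifiers to this definable set shows that $h$ restricted to the block is an elementary embedding of one gadget into the other; since the target gadget is minimal, the image is the whole block and the restriction is an isomorphism. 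By the rigidity fact the two blocks must have the same type, i.e. $mEn\iff g(m)Eg(n)$, so $g$ is a graph embedding.

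For the forward direction, start from a graph embedding $g\colon\mc G\emb\mc H$. Because graph embeddings preserve and reflect edges, the block $A_{\langle m,n\rangle+1}$ of $f(\mc G)$ and the block $A_{\langle g(m),g(n)\rangle+1}$ of $f(\mc H)$ are copies of the same gadget, so I fix an isomorphism between them; together with $a_m\mapsto a_{g(m)}$ this defines $h$, which is well defined and injective since it acts on disjoint blocks and vertices. A direct check shows $h$ preserves and reflects $W$, the in-block relations $R_\nu$ and $graph_{F_\nu}$, and the linking relations $O$ and $N$, so $h$ is an embedding onto a substructure $M:=h(f(\mc G))$. The remaining, and main, task is to prove $M\eemb f(\mc H)$ via the Tarski–Vaught test: given $f(\mc H)\models\exists y\,\phi(\bar a,y)$ with $\bar a\in M$, I must find a witness in $M$. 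If the constraints on $y$ include an $O$-atom with the parameters, then $y$ lies in an image block and pulls back through the fixed isomorphism; otherwise $y$ need only lie in some block $N$-linked to a parameter vertex, and here I use that every vertex of $f(\mc G)$ is $N$-linked to infinitely many blocks while $\A\equiv\B$, so the first-order type $y$ must realize is available on the $f(\mc G)$ side irrespective of the edge/non-edge type of the target block.

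The hard part is exactly this witness control: existential witnesses may live in blocks outside the range of $h$, or in blocks whose edge/non-edge type differs from anything the corresponding parameter vertex sees in $f(\mc G)$. The elementary equivalence $\A\equiv\B$ is precisely what neutralizes this, since no first-order formula can detect which gadget a block is — consequently all vertices share a $1$-type and all blocks are first-order alike, so witnesses are freely matchable modulo the $O$-links that pin down specific image blocks. I expect the cleanest rigorous route is a relativization/back-and-forth argument decomposing each formula into its vertex part, its per-block parts (handled by the quantifier elimination and elementary equivalence of the gadget theory), and its $O$/$N$-linking part. Read in the two directions, this decomposition shows that $\A\equiv\B$ powers the forward implication while $\A\not\cong\B$ together with minimality powers the backward one.
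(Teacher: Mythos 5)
Your backward direction is correct, and it is actually more informative than the paper's own treatment, which dismisses that implication as following ``trivially'' from the construction: your rigidity fact (the image of an elementary embedding into a minimal structure is an elementary substructure, hence everything, hence the embedding is an isomorphism, contradicting $\mc S_0\not\cong\mc S_1$) is precisely the reason the construction uses minimal models, your non-isomorphism witness ($\bar 0$ satisfies $R_{0^k}$ for every $k$ while no eventually-$1$ string can) is valid, and the relativization argument showing that the restriction of $h$ to an $O$-definable block is an elementary embedding of gadgets is sound.

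The gap is in the forward direction, and it is not cosmetic. Your Tarski--Vaught case analysis (``if the constraints on $y$ include an $O$-atom \dots otherwise \dots'') only makes sense when $\phi(\bar a,y)$ is a conjunction of literals; an arbitrary first-order formula has no ``constraints on $y$'' to inspect and may quantify over all of $f(\mc H)$, including the vertices and blocks outside $M=h(f(\mc G))$. Worse, the witness $b\in M$ must satisfy $\phi(\bar a,b)$ \emph{in $f(\mc H)$}, not in $M$, so your appeal to ``the first-order type $y$ must realize is available on the $f(\mc G)$ side'' presupposes exactly the elementarity you are trying to establish. Nor can one relocate witnesses cheaply: an automorphism of $f(\mc H)$ induces an automorphism of $\mc H$ on vertices and so cannot, in general, move a witness from a non-image block into $M$. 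What closes this hole is the step you explicitly defer --- the decomposition of formulas across blocks --- and that step is the entire content of the direction. The paper carries it out with Ehrenfeucht--Fra\"iss\'e games: for every tuple $\bar g$ and every length $m$, player II wins $G_m((f(\mc G),\bar g),(f(\mc H),\hat g(\bar g)))$ by answering moves in already-touched parts through the fixed isomorphisms and answering moves in fresh blocks by choosing a fresh block on the other side and running a length-$m$ winning strategy there, which exists because $\mc S_0\equiv\mc S_1$ (restricting to finite sublanguages, where EF games characterize elementarity). Until you execute this composition-of-strategies argument, or an equivalent Feferman--Vaught-style analysis, your proof of the forward implication is incomplete.
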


\begin{proof}
That \(\mc G\emb \mc H\) if \(f(\mc G)\eemb f(\mc H)\) follows trivially
from the construction. To show the converse we will use the following model
theoretic fact: For two $\mc L$-structures $\A$ and $\B$, $\A$ is an elementary substructure of $\B$ if and
only if for every finite $\mc R\subseteq \mc L$, the $\mc R$ reduct of $\A$ is
an elementary substructure of the $\mc R$ reduct of $\B$.
Necessity follows trivially from the fact that $\mc R\subseteq \mc L$ and
sufficiency is easily seen by noticing that every first order formula $\phi$ is
in a finite $\mc R_\phi\subseteq \mc L$.

So, say \(\mc G\emb \mc H\) by \(h\). We get an
induced embedding \(\hat h\) defined such that for all $i,i'\in G$, if \(h(i)=j\),
then \(\hat h(a_i)= a_j\) and $\hat h$ is the canonic isomorphism between the
substructure associated to $(a_i, a_{i'})$ and the one associated to $(\hat
h(a_i),\hat h(a_{i'}))$. Without loss of generality we may assume that $f(\mc
G)$ is a substructure of $f(\mc H)$, i.e., that $\hat h$ is the identity.
We use Ehrenfeucht-Fraïssé games to verify that in every finite $\mc R\subseteq \mc
L$, $f(\mc G)$ is an elementary substructure of $f(\mc H)$. We assume without
loss of generality that $\mc R=\{O,W,R_{\nu_0},\dots, R_{\nu_k},
Graph_{F_{\nu_0}},\dots, Graph_{F_{\nu_k}}\}$ where $\nu_i$ is the $i^{th}$
string in
the lexicographical ordering of $2^{<\omega}$ and $k\in\omega$. Let us show
that player \II{} has a winning strategy in $G_m((f(\mc G),g_1,\dots,g_n),(f(\mc
H),g_1,\dots,g_n))$ for arbitrary $m\in\omega$ played in $\mc R$. First, notice that since $\mc S_0\equiv \mc S_1$, \II{}
has a winning strategy for $G_m(\S_0, \S_1)$ in the reduct
$\{R_{\nu_0},\dots, R_{\nu_k}, Graph_{F_{\nu_0}},\dots,Graph_{F_{\nu_k}}\}$. The
following is a winning strategy for $G_m((f(\mc G),g_1,\dots,g_n),(f(\mc
H),g_1,\dots,g_n))$ played in $\mc R$. Say that at turn $i$, the played substructures are $G_i$
and $H_i$ given by the partial isomorphism $h_i$. Assume we are on turn $i+1$.
\begin{enumerate}
  \item If \I{} plays an element $c$ in the $O$-closure of $g_1,\dots,g_n$,
    then let $h_{i+1}(c)=c$.
  \item If \I{} plays an element $c$ in $f(\mc G)$ not in the $O$-closure of $G_i$,
    say it is associated to $(a,b)$ where none of $a,b$ is in the
    $O$-closure of $G_i$, then pick vertices $(a',b')$ in $f(\mc H)$. If $c=a$
    or $b$, let $h_{i+1}(c)=a'$, respectively, $h_{i+1}(c)=b'$. Otherwise start
    running a $G_m(\mc S_{(a,b)},\mc S_{(a',b')})$ winning strategy
    $w_{(a,b)}^{(a',b')}$ and
    let $h_{i+1}(c)=w_{(a,b)}^{(a',b')}(c)$.
  \item If \I{} plays an element $c$ in $f(\mc G)$ not in the $O$-closure of
    $G_i$ but associated to $(a,b)$ where either $a$ or $b$ is in $G_i$, then pick $(a',b')$
    such that $a'$, respectively $b'$, is the element corresponding to $a$,
    respectively $b$, in $H_i$ and continue as
    in (2), mutatis mutandis.
  \item If \I{} plays an element in $f(\mc H)$ not in the $O$-closure of $H_i$,
    then as $f(\mc G)$ is infinite, \II{} can play as in the cases (2) and (3),
    mutatis mutandis.
  \item If \I{} plays an element $c$ in $f(\mc G)$ that is in the
    $O$-closure of $G_i$ but not in the $O$-closure of $g_1,\dots,g_n$, then it
    is associated to some $(a,b)$ in $f(\mc G)$ and by induction there is
    a winning strategy $w_{(a,b)}^{(a',b')}$ that has already been used.
    If $c=a$ or $c=b$, let $h_{i+1}(c)=a'$, respectively, $h_{i+1}(c)=b'$.
    Otherwise let $h_{i+1}(c)=w_{(a,b)}^{(a',b')}(c_1,\dots,c_k,c)$ where
    $c_1,\dots,c_k$ are the elements from the structures associated with
    $(a,b)$ and $(a',b')$ played by $\I{}$ so far.
  \item If \I{} plays an element $c$ in $f(\mc H)$ that is in the
    $O$-closure of $H_i$ but not in the $O$-closure of $g_1,\dots,g_n$, then
    play as in (5), mutatis mutandis.
\end{enumerate}
Since at each turn we play according to winning strategies for games of the
form $G_m(\mc S_i,\mc S_j)$ where $i,j\in \{0,1\}$ we obtain that $h_m$ is a partial isomorphism between $(f(\mc
G),g_1,\dots, g_n)$ and $(f(\mc H), g_1,\dots,g_n)$. We thus have given
a winning strategy for $G_m((f(\mc G),g_1,\dots,g_n),(f(\mc
H),g_1,\dots,g_n))$.
\end{proof}

\subsection{Graphs are complete for elementary
embeddability}\label{sec:graphscomplete}

We will show that for every class of structures \(\mathfrak K\), there
is a computable reduction \(\eemb_\mathfrak K\ra \eemb_\mf G\).

The result we are going to prove appeared in \cite{rossegger2018}.
There, a proof sketch of the fact that the reduction preserves elementary
bi-embeddability spectra was given. We will give a full proof of this fact
in~\cref{sec:degreespectra}. Note that
the coding used in the reduction is not new but was already used in
\cite{andrews2015} to show that graphs are universal for theory spectra.
Let us first describe this coding.

We may assume without loss of generality that \(\mf K\) is a class of
structures in relational language $\mc L=(R_1,\dots)$ where each \(R_i\)
has arity \(i\). Given \(\A\in \mf K\), the graph \(g(\A)\) has  
three vertices \(a\), \(b\), \(c\) where to \(a\) we connect the unique
\(3\)-cycle in the graph, to \(b\) the unique \(5\)-cycle, and to \(c\)
the unique \(7\)-cycle. For each element \(x\in A\) we add a vertex
\(v_x\) and an edge \(a\edge v_x\). For every \(i\) tuple
\(x_1,\dots, x_i\in A\) we add chains of length \(i+k\) for every \(k\),
\(1\leq k \leq i\) with common last elements \(y\). We add an edge
\(v_{x_k}\edge y_1\) only if \(y_1\) is the first element of the chain
of length of \(i+k\). If \(\A\models R_i(x_1,\dots,x_i)\) we add an edge
\(y\edge b\) and otherwise add an edge \(y\edge c\). This finishes the
construction. See \cref{fig:redugraphs} for an example.

\begin{figure}
  \begin{tikzpicture}[->,]
  \tikzset{vertex/.style={circle,fill=black,minimum
  size=3pt,inner sep=2pt}}
    \tikzset{>=latex}
    \node[vertex] (a) at (0,0){};
    \node[vertex] (b) at (6,0.5){};
    \node[vertex] (c) at (6,-0.5){};
    \draw (a) to [out=140,in=220,looseness=20] node[fill=white,inner sep=1pt] {$3$}  (a);
    \draw (b) to [out=320,in=40,looseness=20] node[fill=white,inner sep=1pt] {$5$}  (b);
    \draw (c) to [out=320,in=40,looseness=20] node[fill=white,inner sep=1pt] {$7$}  (c);
    \node[vertex,label={$v_3$}] (v_3) at (1,2){};
    \node[vertex,label={$v_2$}] (v_2) at (1,0){};
    \node[vertex,label=below:{$v_1$}] (v_1) at (1,-2){};
    
    \foreach \x in {1,...,3}
    {\draw (a) -- (v_\x);}
      \node[vertex] (y) at (5,1) {};  
    \foreach \x in {3,...,5}
    {
      \foreach \y in {1,...,\x}
      {
        \pgfmathsetmacro{\ycord}{(\x-3)/2+0.5}
        \node[vertex] (y_\x\y) at (2+\y/2,\ycord){};
      }
      \pgfmathsetmacro{\li}{\x-1}
      \foreach \y in {1,...,\li}
      {
        \pgfmathsetmacro{\h}{\y+1}
        \draw (y_\x\y) -- (y_\x\h);
      }
      \draw (y_\x\x) -- (y);
      \pgfmathsetmacro{\v}{\x-2}
      \draw (v_\v) -- (y_\x1);
    }
    \draw (y) -- (b);
    \node[vertex] (z) at (5,-1) {};  
    
    \foreach \x in {3,...,5}
    {
      \foreach \y in {1,...,\x}
      {
        \pgfmathsetmacro{\ycord}{-(\x-3)/2-0.5}
        \node[vertex] (z_\x\y) at (2+\y/2,\ycord){};
      }
      \pgfmathsetmacro{\li}{\x-1}
      \foreach \y in {1,...,\li}
      {
        \pgfmathsetmacro{\h}{\y+1}
        \draw (z_\x\y) -- (z_\x\h);
      }
      \draw (z_\x\x) -- (z);
    }
    \draw (v_1) -- (z_51);
    \draw (v_2) -- (z_41);
    \draw (v_3) -- (z_31);
    \draw (z) -- (c);
\end{tikzpicture} 
\caption{\label{fig:redugraphs} Part of the graph $F(\A)$ coding that
$\A\not\models R_3(3,2,1)$ and $\A\models R_3(1,2,3)$.}
\end{figure}
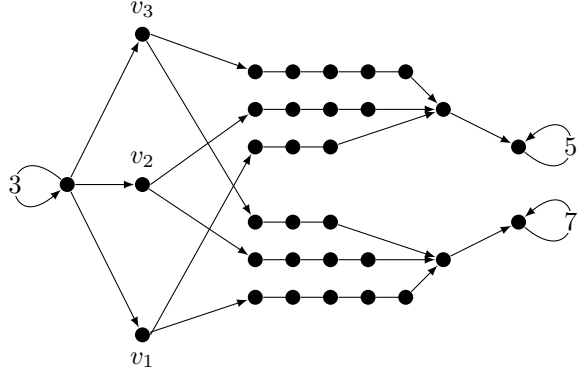 


Let us fix the some notation for the following proofs. Given a structure
$\A$ and $\bar a\in A^{<\omega}$ we let $\langle \bar a\rangle^\A$ be the
substructure of $\A$ generated by $\bar a$.
\begin{lemma}\label{lem:redupreserveeemb}
  For $\A,\B\in \mf K$, $\A\eemb \B$ if and only if $g(\A)\eemb g(\B)$.
\end{lemma}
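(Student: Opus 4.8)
The plan is to prove both directions of the biconditional, with the reverse direction (that $g(\A)\eemb g(\B)$ implies $\A\eemb\B$) being the real content. The forward direction is the easy functoriality check: given an elementary embedding $h\colon\A\eemb\B$, I would build an induced map $\hat h$ on $g(\A)$ that sends the distinguished vertices $a,b,c$ and their attached cycles to themselves, sends each $v_x$ to $v_{h(x)}$, and sends each tuple-coding chain for $(x_1,\dots,x_i)$ to the corresponding chain for $(h(x_1),\dots,h(x_i))$. Because $h$ is elementary, it preserves the relations $R_i$, so chains ending at $b$ are sent to chains ending at $b$ and likewise for $c$; I would then verify $\hat h$ is elementary by the same Ehrenfeucht--Fra\"iss\'e argument used in \cref{lem:redupreserveemb}, noting that the rigid ``gadget'' structure (the $3$-, $5$-, $7$-cycles and the fixed chain lengths) forces player $II$'s responses to mirror those of the game on $\A$ and $\B$.

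For the reverse direction I would start from an arbitrary elementary embedding $G\colon g(\A)\eemb g(\B)$ and argue that $G$ must respect the coding scheme, thereby reading off an elementary embedding of $\A$ into $\B$. First I would pin down the distinguished vertices by first-order definable properties: $a$, $b$, and $c$ are the unique vertices lying on a $3$-cycle, $5$-cycle, and $7$-cycle respectively, and each is definable without parameters, so any elementary embedding fixes them (more precisely, sends each to the unique vertex of $g(\B)$ with the same cycle-attachment, which is again $a,b,c$). Next, the vertices $v_x$ are exactly the neighbors of $a$ that are not on a cycle, a definable class, so $G$ maps $\{v_x : x\in A\}$ into $\{v_y : y\in B\}$; this induces the underlying map $\sigma\colon A\to B$ via $G(v_x)=v_{\sigma(x)}$. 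The crux is to show $\sigma$ is elementary, i.e. $\A\models\psi(\bar x)\iff\B\models\psi(\sigma\bar x)$ for every first-order $\psi$.

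The hard part will be translating the first-order structure of $\A$ faithfully into the graph-theoretic structure of $g(\A)$ so that elementarity transfers back. The key observation is that the relation $R_i(x_1,\dots,x_i)$ is encoded by a definable configuration: the presence of chains of lengths $i+1,\dots,i+2i$ emanating from $v_{x_1},\dots,v_{x_i}$ with a common endpoint $y$, and an edge from $y$ to $b$ rather than to $c$. Since chain lengths, common endpoints, and the target vertices $b,c$ are all first-order definable in the language of graphs, there is for each formula $R_i(x_1,\dots,x_i)$ a graph formula $\theta_i(u_1,\dots,u_i)$ such that $\A\models R_i(\bar x)$ iff $g(\A)\models\theta_i(v_{x_1},\dots,v_{x_i})$, and the same over $g(\B)$. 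More generally I would argue by induction on the complexity of $\psi$ that there is a graph formula $\psi^\ast$ with $\A\models\psi(\bar x)\iff g(\A)\models\psi^\ast(v_{\bar x})$, handling the quantifier case by showing that existential witnesses among the $v_x$'s are captured by the definable predicate ``is a non-cyclic neighbor of $a$.'' Since $G$ is elementary it preserves $\psi^\ast$, and combining the two translations yields $\A\models\psi(\bar x)\iff\B\models\psi(\sigma\bar x)$, as required. The main obstacle, and the step demanding the most care, is verifying that the chain-and-endpoint gadgets are genuinely rigid under elementary maps so that no ``spurious'' configurations in $g(\B)$ can be hit, which is what guarantees the back-translation is well-defined and exact.
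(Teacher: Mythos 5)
Your backward direction ($g(\A)\eemb g(\B)\Rightarrow\A\eemb\B$) is exactly the paper's argument: pin down $a,b,c$ and the coding vertices $v_x$ by formulas, translate each $\mc L$-formula $\psi$ into a graph formula $\psi^\ast$ by induction on complexity (the paper phrases this as definability of the sets $D_\psi^\A$ in $g(\A)$), and then transfer along the elementary graph embedding. That half is fine.

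The weak point is the forward direction, which you label ``the easy functoriality check''; in the paper it is the substantial half. Constructing the induced embedding $\hat h$ is indeed routine, but its elementarity cannot be verified ``by the same Ehrenfeucht--Fra\"iss\'e argument used in \cref{lem:redupreserveemb}'': that argument rests on the fact that the gadgets $\mc S_0$ and $\mc S_1$ used there are elementarily equivalent and minimal, so that player $II$, confronted with a move inside a never-touched gadget, may answer inside \emph{any} fresh gadget of the other structure. In $g(\A)$ the gadgets are chains ending at $b$ or at $c$; these are \emph{not} elementarily equivalent (that is the whole point of the coding), and each one is attached to specific coding vertices $v_{x_1},\dots,v_{x_i}$. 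So when player $I$ plays inside a part of $g(\B)$ coding a tuple $\bar b$ of $\B$ not lying in (the image of) $\A$, player $II$ must first produce a tuple in $\A$ realizing the same atomic facts over the elements already in play, and the only source for such a tuple is the elementarity of $\A\eemb\B$ itself --- the paper extracts it via the Tarski--Vaught test applied to the conjunction of the atomic diagram of the coded finite substructure. Your phrase that $II$'s responses ``mirror those of the game on $\A$ and $\B$'' is the germ of exactly this idea, but note that it is the elementarity of $h$, not the rigidity of the gadgets, that makes the mirroring possible; rigidity only guarantees that the mirrored response is correct once found. With that step made explicit, your plan becomes the paper's proof; as literally written, copying the argument of \cref{lem:redupreserveemb} would fail.
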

\begin{proof} 

$(\Ra)$. Assume that \(\mc A \eemb \mc B\) and that
\(\mc A\) is an elementary substructure of $\mc B$. We may also assume without loss of
generality that \(g(\A)\subseteq g(\B)\). We will show that for
all \(n\in \omega\) and any \(\ol a\in g(\mc A)^{<\omega}\) player II
has a winning strategy for the \(n\) turn
Ehrenfeucht Fra\"iss\'e game
\(G_n((g({\mc A}),\ol a),(g({\mc B}),\ol a))\). Assume that \(n\)
is the least such that player II has no winning strategy for
\(G_n((g({\A}),\ol a),(g(\B),\ol a))\). Consider the set of partial
isomorphisms from $(g(\A),\ol a)$ to $(g(\B),\ol a)$. This set can not have the
back-and-forth property. In particular, the back-and-forth property fails
already if we only consider partial isomorphisms with domain of size $n+|\ol a|$.
Otherwise there would be a winning strategy for $G_n((g(\A),\ol a), (g(\B),\ol
a))$. So, either there is
\(\ol v \in g(\A)^{n}\) such that for all \(\ol u \in g(\B)^{n}\),
\(\langle \ol a \ol v\rangle^{g({\A})}\not \cong \langle \ol a  \ol
u \rangle^{g({\B})}\)
or there is \(\ol u\in g(\B)^{n}\) such that for all
\(\ol v \in g(\B)^{n}\),
\(\langle \ol a \ol u \rangle^{g(\B)}\not \cong \langle \ol a\ol
v \rangle^{g({\A})}\).
We will derive a contradiction assuming the second case. Deriving one from the
first case can be done in a similar fashion.

Notice that \(\ol a\ol u\) is in a substructure of \(g({\B})\) coding
a finite substructure of \(\B\) in a finite part \(\mc L_1\) of the
language of \(\B\). Extend \(\langle \ol a \ol u\rangle^{g({\B})}\)
so that it codes such a substructure \(\B_1\) of \(\B\). Consider
the conjunction \(\phi\) of atomic formulas, or negations thereof, true of
\(\mc B_1\) in \(\mc L_1\). Let \(\ol a'\) be the elements in
\(B_1\cap A\) and \(\ol u'\) the elements in \(B_1\setminus A\). Then
\(\mc B \models \phi(\ol a'\ol u')\) and the Tarski-Vaught test gives us
elements \(\ol v'\) in \(\mc A\) such that
\(\mc A\models \phi(\ol a'\ol v')\). It follows that we have a partial
isomorphism between \(\langle \ol a'\ol u'\rangle^{\B}\) and
\(\langle\ol a'\ol v'\rangle^{\A}\) in \(\mc L_1\). This induces an
isomorphism between the subgraph coding \(\B_1\) and the subgraph coding
\(\langle \ol a'\ol v'\rangle^{\A}\). But
\(\langle \ol a \ol u \rangle^{g(\B)}\) is a subgraph of the
graph coding \(\mc B_1\) and thus it is isomorphic to a substructure
\(\langle \ol a\ol v\rangle^{g(\A)}\) of the structure coding
\(\langle \ol a'\ol v'\rangle^{\A}\), a contradiction.

$(\La)$. An easy induction on the quantifier depth of formulas in $\mc L$ shows that for every $\A\in\mathfrak K$ and $\mc L$-formula $\phi$ with $n$-free
variables the set 
\[D_\phi^\A=\{ (v_{a_1},\dots, v_{a_n}) : (\A,a_1,\dots,a_n) \models \phi(a_1,\dots,a_n)\}\]
is definable in $g(\A)$. Now, assume that $g(\A)\eemb g(\B)$ and without loss
of generality that $g(\A)$ is an elementary substructure of $g(\B)$. Let $g_\B:
\B\ra g(\B)$ be defined by $g_\B: b\mapsto v_b$. Notice
that the map $a\mapsto g_\B^{-1}(v_a)$ is an embedding of $\A$ in $\B$. To see
that this embedding is elementary assume that $(\A,\ol a)\models \phi$, then
$\bar v_{\bar a}\in D_\phi^\A$ and by elementarity $\bar v_{\bar a}\in D_\phi^\B$.
So, $(\B, g_\B^{-1}(\bar v_{\bar a}))\models \phi(g_\B^{-1}(\bar v_{\bar a}))$.

\end{proof}

Concatenating the reductions $f$ and $g$ and from the fact that $\emb_\mf G$,
$\biemb_\mf G$ are complete $\pmb \Sigma^1_1$ quasi-orders, respectively
equivalence relations, we obtain \cref{thm:elbeborel}. 

To prove \cref{thm:elbecomp} notice that $f$ and $g$ are computable. Thus there
is a Turing operator $\Phi$ such that $\Phi=g\circ f$. We can find a Turing machine $\phi_i$
such that $\phi_i(j,k)=\Phi^{\A_j}(k)$ for all $k\in\omega$ if $\A_j$ is
a total computable structure. Using the s-m-n theorem we can then get a computable
function $j\mapsto u(i,j)$ where $u(i,j)$ is an index for $\Phi^{\A_j}$. Thus
$\emb_\mf G$ is computably reducible to $\eemb_\mf G$ as a quasi-order on
$\omega$. Fokina and Friedman~\cite{fokina2009} showed that $\emb_\mf G$ is
$\Sigma_1^1$ complete. Thus, $\eemb_\mf G$ is also $\Sigma^1_1$ complete and
\cref{thm:elbecomp} follows.
\section{Degree spectra}\label{sec:degreespectra}
In this section we finish the proof of \cref{thm:dgsp}. As noticed before the
two reductions $f: \mf G\ra \mf C$ and $g:\mf C\ra \mf G$ are computable. We
will see that the two functions induce an even stronger notion of reduction
that allows us to relate the degree spectra realized by $\biemb_\mf G$ and
$\elbe_\mf G$.
\begin{definition}[cf.~\cite{harrison-trainor2017,miller2018}]
    Let $\mf C$ and $\mf D$ be categories. A \emph{computable functor} between $\mf C$ and $\mf D$ is a pair of computable operators $(\Phi,\Phi_*)$ such that
    \begin{enumerate}
      \item for all $\A\in \mf C_1$, $F(\A)=\Phi^{\A}$,
      \item for all $f:\A\ra \B\in \mf C_2$, $F(f)=\Phi_*^{\A\oplus f\oplus \B}$.
    \end{enumerate}
\end{definition}
Computable functors preserve many computability theoretic properties. One
example are degree spectra: Recall
that for $X,Y\subseteq \mathcal P(\omega)$, $X$ is \emph{Medwedev reducible} to
$Y$, $X\leq_s Y$, if there is a Turing operator $\Phi$ such that for all $y\in
Y$, there is $x\in X$ such that $\Phi^y=x$. We have in
particular that if $F:(\mf C,\ar_1)\ra (\mf D, \ar_2)$ is a computable functor, and
$\sim_i$ is the equivalence relation given by \[ \A\sim_i\B \LR \A\ar_i \B
\land \B \ar_i \A,\] then for all $\A\in \mf C$, 
$DgSp_{\sim_1}(F(\A))\leq_s DgSp_{\sim_2}(\A).$

It is an easy exercise to see that $g\circ f$ induces a computable functor $H:(\mf
G,\emb)\ra (\mf G,\eemb)$ and thus for all $\mc G\in \mf G$,
$DgSp_\biemb(H(\mc G))\leq_s DgSp_\elbe(\mc G)$.

To get that every degree spectrum realized in $\mf C$ is also realized in
$\mf D$ we need a stronger notion of reducibility. To define this we need an
effectivization of the category theoretic notion of a natural isomorphism
between functors.
\begin{definition}[\cite{harrison-trainor2017}]\label{def:effiso}
  A functor $F:\mf C\ra \mf D$ is \emph{effectively isomorphic} to $G:\mf C\ra
  \mf D$ if there is a Turing operator $\Lambda$ such that for every
  $\A\in \mf C$, $\Lambda^\A$ is an isomorphism from $F(\A)$ to $G(\A)$, and the following diagram
  commutes for all $\A,\B\in \mf C_1$ and every $\gamma:\A\ra \B\in \mf C_2$.\\
  \begin{center}
  \begin{tikzcd}
    F(\A)\ar[r,"\Lambda^\A"]\ar[d,"F(\gamma)"] & G(\A)\ar[d,"G(\gamma)"]\\
    F(\B)\ar[r,"\Lambda^\B"] & G(\B)
  \end{tikzcd}
\end{center}
\end{definition}
\begin{definition}[cf.~\cite{harrison-trainor2017}]\label{def:cbf}
  We say that $(\mf C,\ar_1)$ is \emph{CBF-reducible} to $(\mf D,\ar_2)$, $(\mf
  C,\ar_1)\leq_{CBF}(\mf D,\ar_2)$ if 
  \begin{enumerate}
    \item there is a computable functor $F:\mf C \ra \mf D$ and a computable functor $G:\mf
  D \supseteq \hat{\mf D}\ra \mf C$ where $\hat{\mf D}$ is the $\sim_2$-closure 
  of $F(\mf C )$,
    \item $F\circ G$ is effectively isomorphic to $Id_{\hat{\mf D}}$, $G\circ
      F$ is effectively isomorphic to $Id_\mf C$,
    \item and, if $\Lambda_\mf C$, $\Lambda_\mf D$ are the operators witnessing
      the effective isomorphism between $G\circ F$ and $Id_\mf C$,
      respectively, $F\circ G$ and $Id_{\hat{\mf D}}$, then for every $\A\in \mf C$,
      $F(\Lambda_\mf C^\A)=\Lambda_\mf D^{F(\A)}:F(\A) \ra F(G(F(\A)))$ and every
      $\B\in \hat{\mf D}$, $G(\Lambda_\mf D^\B)=\Lambda_\mf C^{G(\B)}:G(\B)\ra
      G(F(G(\B)))$.
  \end{enumerate}
\end{definition}
Consider two structures $\A$ and $\B$ and a morphism $f:\A\cong\B$. Then,
clearly $\A\leq_T \B\oplus f$; after all, we have that $R^\A(a_1,\dots,
a_n)$ if and only if $R^\B(f(a_1),\dots, f(a_n))$. The following definition
generalizes this observation.
\begin{definition}
  A category $\mf C$ is \emph{degree invariant} if for every
  $\A,\B\in\mf C_1$ and every $f:\A\ra\B\in \mf C_2$, $f\equiv_T f^{-1}$ and $\A\leq_T
  \B\oplus f$.
\end{definition}
\begin{proposition}\label{prop:specpreserving}
  If $\mf C$ and $\mf D$ are degree invariant and $\mf C\leq_{CBF} \mf D$, then every set realized as a $\sim_1$-spectrum
  in $\mf C$ is realized as a $\sim_2$-spectrum in $\mf D$.
\end{proposition}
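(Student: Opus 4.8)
The plan is to take $\hat{\B}=F(\A)$ as the witness and prove the stronger statement that $DgSp_{\sim_1}(\A)=DgSp_{\sim_2}(F(\A))$ as sets, so that the set $DgSp_{\sim_1}(\A)$ is literally realized as the $\sim_2$-spectrum of the object $F(\A)\in\mf D$. First I would record the two Medvedev reductions coming from the computable functors witnessing $\mf C\leq_{CBF}\mf D$. Applying the reduction induced by a computable functor to $F\colon(\mf C,\ar_1)\ra(\mf D,\ar_2)$ gives $DgSp_{\sim_2}(F(\A))\leq_s DgSp_{\sim_1}(\A)$, and applying it to $G\colon(\hat{\mf D},\ar_2)\ra(\mf C,\ar_1)$ at the object $F(\A)\in\hat{\mf D}$ gives $DgSp_{\sim_1}(G(F(\A)))\leq_s DgSp_{\sim_2}(F(\A))$. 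Since $G\circ F$ is effectively isomorphic to $Id_{\mf C}$, the operator $\Lambda$ yields an isomorphism $\Lambda^{\A}\colon G(F(\A))\to\A$; in particular $G(F(\A))\sim_1\A$. As $DgSp_{\sim_1}$ is by definition an invariant of the $\sim_1$-class of its argument (because $\sim_1$ is an equivalence relation), this gives $DgSp_{\sim_1}(G(F(\A)))=DgSp_{\sim_1}(\A)$. Chaining the two reductions then yields the Medvedev equivalence $DgSp_{\sim_1}(\A)\equiv_s DgSp_{\sim_2}(F(\A))$.

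To pass from Medvedev equivalence to the genuine set equality I would refine this to a degree-by-degree statement. Unwinding the reductions, each copy $\A'\sim_1\A$ is sent by $F$ to $F(\A')=\Phi_F^{\A'}\sim_2 F(\A)$ with $D(F(\A'))\leq_T D(\A')$, and each copy $\B'\sim_2 F(\A)$ is sent by $G$ to $G(\B')=\Phi_G^{\B'}\sim_1\A$ with $D(G(\B'))\leq_T D(\B')$. The two effective isomorphisms make these assignments mutually inverse up to isomorphism: $G(F(\A'))\cong\A'$ via $\Lambda^{\A'}$, and $F(G(\B'))\cong\B'$ via the operator witnessing $F\circ G\cong Id_{\hat{\mf D}}$. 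The point that remains to be established is that these operators are in fact \emph{degree preserving}, i.e. $D(F(\A'))\equiv_T D(\A')$ for every copy $\A'\sim_1\A$, and symmetrically on the $\mf D$ side. Granting this, $F$ carries a copy of degree $X$ in the $\sim_1$-class of $\A$ to a copy of degree $X$ in the $\sim_2$-class of $F(\A)$, and $G$ does the reverse, so the two spectra realize exactly the same Turing degrees, which is the claim.

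The hard part is the lower bound $D(\A')\leq_T D(F(\A'))$, that is, recovering the degree of $\A'$ from that of its image. From $D(F(\A'))$ one can compute $G(F(\A'))=\Phi_G^{F(\A')}$, a copy \emph{isomorphic} to $\A'$; what is not immediate is that one can also compute an isomorphism back to $\A'$, since the effective isomorphism $\Lambda^{\A'}$ is a priori only computable from $D(\A')$ and not from $D(F(\A'))$. Resolving this is exactly where the full strength of effective isomorphism is needed, as opposed to plain computable functoriality, which by itself delivers only the Medvedev reductions of the first paragraph. The route I would attempt is to use the naturality of $\Lambda$ together with the second effective isomorphism to express $\Lambda^{\A'}$ in terms of data computable from $F(\A')$ alone, thereby closing the two inequalities $D(F(\A'))\leq_T D(\A')$ and $D(\A')\leq_T D(F(\A'))$ into a Turing equivalence. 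I expect this reconstruction step to be the main obstacle; once it is in place the degree-preservation of $F$ and $G$, and hence the set equality $DgSp_{\sim_1}(\A)=DgSp_{\sim_2}(F(\A))$ and the proposition, follow at once.
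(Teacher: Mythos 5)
Your first paragraph is correct (the two Medvedev reductions, plus $GF(\A)\cong\A$ and hence $GF(\A)\sim_1\A$), but the step you reduce everything to --- degree preservation, $D(\A')\equiv_T D(F(\A'))$ for every copy $\A'\sim_1\A$ --- is not just hard, it is false for general CBF-reductions, so the reconstruction of $\Lambda^{\A'}$ from $F(\A')$ that you hope to extract from naturality cannot exist. Counterexample: let $\mf C=\mf D$ be the class of all copies of the rigid structure $(\omega;S,Z)$ (successor relation together with a unary predicate naming $0$), with $\ar_1=\ar_2$ taken to be isomorphism (equivalently, embedding --- they coincide for this structure). Let $F$ send every copy to one fixed computable copy $\A_0$ and every morphism to the identity, and let $G$ be the identity functor on $\hat{\mf D}$. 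The unique isomorphism $\Lambda^{\A'}\colon GF(\A')=\A_0\ra \A'$ is uniformly computable from $D(\A')$ (search for the element named by $Z$, then iterate successor), and every naturality square commutes for free, since between copies of a rigid structure there is at most one isomorphism. So this is a legitimate CBF-reduction in the sense of the paper, yet $F$ maps copies of arbitrarily high degree to a computable structure; degree preservation fails as badly as possible. The same example shows your closing intuition is backwards: naturality can be entirely vacuous, so ``the full strength of effective isomorphism'' cannot be what drives the proposition.

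The missing idea is upward closure of spectra, not degree preservation of the functor. By Knight's theorem~\cite{knight1986}, the isomorphism spectrum of an automorphically nontrivial structure is closed upwards in the Turing degrees; a $\sim$-spectrum is a union of isomorphism spectra and hence also closed upwards. Granting this, your first paragraph already finishes the proof: if $X\in DgSp_{\sim_1}(\A)$ via a copy $\A'$, then $F(\A')\sim_2 F(\A)$ and $D(F(\A'))\leq_T D(\A')\equiv_T X$, so $X\in DgSp_{\sim_2}(F(\A))$ by upward closure; conversely, if $X\in DgSp_{\sim_2}(F(\A))$ via $\B'$, then $\B'\in\hat{\mf D}$, $G(\B')\sim_1 GF(\A)\cong\A$, and $D(G(\B'))\leq_T X$, so $X\in DgSp_{\sim_1}(\A)$. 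The effective isomorphisms enter only to guarantee $GF(\A)\cong\A$, i.e., that $G$ lands in the correct $\sim_1$-class; their effectivity and naturality play no role in this particular statement (they matter for finer invariants such as computable dimension). Automorphically trivial structures, whose spectra are singletons and not upward closed, require a separate remark, but do not arise in the paper's applications. For comparison: the paper states this proposition without proof, so there is no official argument to measure against; the argument just sketched is the standard folklore one, implicit in the cited work~\cite{harrison-trainor2017}.
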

\begin{proof}
  Say $F:\mf C \ra \mf D$ and $G:\mf D\ra \mf C$ witness that $\mf
  C\leq_{CBF}\mf D$. 
  Fix $\A\in \mf C$ and let $\Lambda$ be the Turing operator witnessing that
  $G\circ F$ is effectively isomorphic to the identity functor on $\mf C$.
  Then, for $\hat\A\sim_1\A$, $\hat \A\geq_T F(\hat\A)\geq_T G(F(\hat \A))$ and by degree
  invariance $\hat \A\leq_T \Lambda^\A\oplus G(F(\hat\A))\equiv_T
  G(F(\hat\A))\leq_T F(\hat\A)$. Thus, $DgSp_{\sim_1}(\A)\subseteq
  DgSp_{\sim_2}(F(\hat\A))$. The proof that $DgSp_{\sim_1}(\A)\supseteq
  DgSp_{\sim_2}(F(\hat\A))$ is similar. So, if $X$ is a $\sim_1$ spectrum
  realized by $\A$ in $\mf C$, then it is realized as a $\sim_2$ spectrum in
  $\mf D$.
\end{proof}
Notice that if $\mf K$ is a class of relational structures, then whether $(\mf
K, \ar)$ is degree invariant only depends on $\ar$. Thus we might say that
a relation on structures is degree invariant.
\begin{definition}
  A class of structures $\mf C$ is \emph{CBF-complete} with respect to
  a degree invariant relation $\ar$, if for every class $\mf K$, $(\mf K, \ar)\leq_{CBF} (\mf
  C,\ar)$.
\end{definition}
We showed in \cref{sec:graphscomplete} that for any class $\mf K$ equipped with the elementary embeddability relation there is a computable reduction $g$ from $(\mf K,\eemb)$ to $(\mf
G,\eemb)$. We can now show that these reductions induce $CBF$-reductions $(\mf
K,\eemb)\leq_{CBF} (\mf G\eemb)$ and that thus graphs are CBF-complete for
elementary embeddability. Verifying the conditions of \cref{def:cbf} is quite
technical, but the core ideas of the proof should not be too
difficult.
\begin{theorem}\label{thm:cbfcomplete}
  The class of graphs is CBF-complete for elementary embeddability.
\end{theorem}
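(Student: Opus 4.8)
The plan is to show that the coding $g$ of \cref{sec:graphscomplete} is the forward half of a $CBF$-reduction $(\mf K,\eemb)\leq_{CBF}(\mf G,\eemb)$ for an arbitrary relational class $\mf K$. First I promote $g$ to a computable functor $F=(\Phi,\Phi_*)$. On objects I set $F(\A)=g(\A)$. On a morphism $\gamma\colon\A\eemb\B$ I let $F(\gamma)$ be the graph map that fixes the three distinguished vertices $a,b,c$ together with their attached $3$-, $5$-, and $7$-cycles, sends each coding vertex $v_x$ to $v_{\gamma(x)}$, and sends the chain gadget attached to a tuple $\ol x$ to the gadget attached to $\gamma\ol x$. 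Because the coding is uniform and local, $F(\gamma)$ is computable from $\A\oplus\gamma\oplus\B$; functoriality is immediate from the definition, and that $F(\gamma)$ is again an elementary embedding follows from the analysis of induced maps in the proof of \cref{lem:redupreserveeemb}.

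Next I construct the decoding functor $G\colon\hat{\mf G}\ra\mf K$, where $\hat{\mf G}$ is the symmetric closure of $F(\mf K)$, that is, the full subcategory of $\mf G$ on graphs bi-embeddable with some $g(\A)$. On an object $\mc H$ the operator locates $a,b,c$ as the unique vertices carrying a $3$-, $5$-, respectively $7$-cycle, reads the universe of $G(\mc H)$ off the neighbours of $a$ outside its cycle, and decides each relation by tracing the attached chain gadget to see whether it terminates at $b$ or at $c$; this is computable, so $G(\mc H)=\Phi^{\mc H}$. On a morphism $h\colon\mc H_1\eemb\mc H_2$ I use that embeddings preserve the cycle markers and the chain-length encoding of tuples, so $h$ must fix $a,b,c$ and carry coding vertices to coding vertices; the induced map on universes is the required embedding $G(h)$, uniformly computable from $\mc H_1\oplus h\oplus\mc H_2$, and it is elementary by the definable-set argument of the $(\La)$ direction of \cref{lem:redupreserveeemb}.

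It remains to verify the two effective isomorphisms of \cref{def:effiso}. For $G\circ F$ the decoding of the canonical code $g(\A)$ returns $\A$ verbatim, so $\Lambda^\A=\mrm{id}$ witnesses $G\circ F\cong\mrm{Id}_{\mf K}$ with a trivially commuting square. For $F\circ G$ I re-encode and build, by matching decoded vertices and gadgets back to their originals in $\mc H$, a computable isomorphism $\Lambda^{\mc H}\colon F(G(\mc H))\ra\mc H$ and check its naturality square. I expect this last step to be the main obstacle: an object of $\hat{\mf G}$ need not literally be $g(\B)$ for any $\B$ but only bi-embeddable with some $g(\A)$, so I must prove that the coding is rigid enough that every such graph is nonetheless isomorphic to the canonical code $g(G(\mc H))$, with the isomorphism computable and natural. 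This rigidity comes precisely from marking $a,b,c$ with distinct odd cycles and encoding tuples by chains of distinct lengths, features that are preserved under embeddings and leave no room for spurious structure; once it is established, \cref{thm:cbfcomplete} follows.
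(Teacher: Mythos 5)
Your skeleton matches the paper's proof exactly: promote $g$ to a computable functor $F$, build a decoding functor $G$ on the symmetric closure $\hat{\mf G}$ of $F(\mf K)$, and verify the two effective isomorphisms of \cref{def:effiso}. The problem is that the step you yourself flag as ``the main obstacle'' --- that every object of $\hat{\mf G}$ is computably and naturally isomorphic to a canonical code $g(G(\mc H))$ --- is the actual mathematical content of the theorem, and you leave it unproved while pointing at a mechanism that cannot prove it. You describe $\hat{\mf G}$ as the graphs \emph{bi-embeddable} with some $g(\A)$ and assert that rigidity holds because the cycle markers and chain lengths are ``preserved under embeddings.'' Plain embeddings and plain bi-embeddability preserve far too little for this: a graph merely bi-embeddable with a code need not have unique $3$-, $5$-, $7$-cycles, need not contain complete chain gadgets, and need not be a code of anything --- this is precisely why the theorem concerns \emph{elementary} bi-embeddability, and why the symmetric closure must be taken with respect to $\elbe$. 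The argument that works, and the one the paper gives, runs through elementarity: given $\mc H\elbe F(\A)$, assume without loss of generality that $\mc H$ is an elementary substructure of $F(\A)$; then every tuple of $\mc H$ has the same type in $\mc H$ as in $F(\A)$, the markers $a,b,c$ are first-order definable (the unique vertices carrying a $3$-, $5$-, respectively $7$-cycle), and for any tuple of coding vertices lying in $\mc H$ the existence of the full chain gadget terminating at $b$ or at $c$ is part of its type and hence witnessed inside $\mc H$. Only this forces $\mc H$ to be (isomorphic to) a code of a structure that, by \cref{lem:redupreserveeemb}, elementarily embeds into $\A$; without it your $G$ is not even well defined on all of $\hat{\mf G}$, and the required termination of the searches in your decoding procedure (each relation query must eventually hit $b$ or hit $c$) has no justification.

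Two smaller inaccuracies. First, $G(F(\A))$ is not ``$\A$ verbatim'': the decoding must index the universe of the output by an enumeration $f$ of the neighbours of $a$, so the witnessing effective isomorphism is the computable map $x\mapsto f^{-1}(v_x)$ rather than the identity, and the naturality square for $G\circ F$ commutes by uniformity of that enumeration, not trivially. Second, your treatment of morphisms of $G$ again invokes preservation of markers under embeddings; for elementary embeddings between graphs already known to be codes this is fine (and is how the paper computes $G(\nu)$), but it presupposes the rigidity claim above, so it cannot be used to bootstrap it.
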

\begin{proof}
  Fix a class $\mf K$. It is clear from the construction that $g$ induces
  a computable functor $F:(\mf K,\elbe)\ra (\mf G,\elbe)$. We have to show that $F(\mf K)$
  is closed under elementary bi-embeddability, that there is a functor
  $G:F(\mf K)\ra \mf K$ such that $F\circ G$ and $G\circ F$ are effectively
  isomorphic to the identity on $\mf K$, respectively, $F(\mf K)$ and that the
  witnesses of these effective isomorphisms agree.

  Let $\mc G\elbe F(\A)$ for some $\A\in \mf K$. We may assume without loss of
  generality that $\mc G$ is an elementary substructure of $F(\A)$. For every
  $\bar a\in \mc G^{<\omega}$, $tp_\mc G(\bar a)=tp_\A(\bar a)$. Thus $\mc G$
  must contain the elements $a,b,c$ of $F(\A)$ with unique $3$-cycles, respectively,
  $5$-cycles and $7$-cycles connected to them. Furthermore, say $\bar a \in \mc G$
  codes elements of $A$ in $F(\A)$ such that $\A\models R_i(\bar a)$, then this
  information must also be coded in $\mc G$ as it is preserved in the type of
  $\bar a$. We can compute a structure $G(\mc G)$ as follows. Fix a $\mc G$
  computable injective enumeration $f$ of the set $\{x: a\edge x\}$. Notice that this can
  be done uniformly since the set $\{x:a\edge x\}$ is uniformly computable in
  all structures in $F(\mf K)$. Let the universe of $G(\mc G)$ be the pull-back
  along $f$. Then for all $a_1,\dots, a_i=\bar
  a \in\omega^{i}$,
  $G(\mc G)\models R_i(\bar a)$ if for every $a_j$, $j<i$, there is a chain of
  $i+j$ connected elements $y_1,\dots y_{i+j}$ with $f(a_j)\edge y_1$, all $j$
  chains share the same last element $y$ and $y\edge b$. Likewise, $G(\mc
  G)\models \neg R_i(\bar a)$ if there are chains satisfying the above
  conditions with $y\edge c$. This finishes the construction of $G(\mc G)$. 
  
  Let $\mc G,\hat{\mc G}\in F(\mf K)$ and $g:\mc G\eemb \hat{ \mc G}$. As both
  graphs are elementary bi-embeddable with images of structures in $\mf K$,
  they have unique vertices $a$, respectively, $\hat a $ with $3$-cycles connected to them. Computably in $\mc G$ and $\hat{\mc G}$ find the
  vertices and enumerate the sets $\{ x: a\edge x\}$, and $\{ x: \hat a\edge
  x\}$ using the same procedure as in the construction of $G(\mc G)$ above. Let
  $f$, respectively, $\hat f$ be these enumerations. Now let $G(g)=\hat
  f^{-1}\circ g\circ f$. By construction $G(g):G(\mc{G})\emb G(\hat{\mc
  G})$ and $G(g)$ is uniformly computable in $\mc G\oplus g\oplus \hat{\mc G}$.
  To see that $G(g)$ is elementary, assume towards a contradiction that it is
  not. Then there is $\ol
  a\in G(\mc G)$ and $\phi$ such that $G(\mc G)\models \phi(\ol a)$ but
  $G(\hat{\mc G})\not\models \phi(G(g)(\ol a))$. Recall that the atomic diagram
  of the tuple $\ol a$ is coded in the type of $f(\ol a)$ in $\mc G$ and
  similarly, the atomic diagram of $G(g)(\ol a)$ is coded in the type of
  $g(f(\ol a))$ in $\hat{ \mc G}$. So, $g$ could not be elementary, a contradiction.

  To see that $G\circ F$ and $F\circ G$ are
  effectively isomorphic to the identities on $\mf K$ and $F(\mf K)$,
  respectively, first note that $G(F(\A))\cong \A$. There is a canonic
  isomorphism given by the composition of the maps $a \mapsto v_a$ and the
  enumeration $f$ of the set $\{x: a\edge x\}$, i.e., the isomorphism is
  defined by $a\mapsto f^{-1}(v_a)$. It is clearly uniformly computable, say by
  $\Lambda_\mf K$. On the other hand let $\mc G \in F(\mf K)$, then we can
  compute an isomorphism between $F(G(\mc G))$ and $\mc G$ by doing the
  following. Every $v\in G$ either defines a relation $R_i$ on some tuple, codes an element, or is used to define $a, b, c$. One can computably
  determine which of the three cases holds. In the second case simply map
  $v$ to $v_{f^{-1}(v)}$, in the third case one can computably determine whether
  $v$ is used to define $a, b, c$ and, using $F$ and $G$, computably find the
  corresponding element in $F(G(\mc G))$. In the first case, we have to find
  the tuple $\ol w$ such that $v$ is involved in the coding of the relation
  $R_i$ on $\bar w$. We then map $v$ to the
  corresponding element in the coding of $R_i$ on the tuple $v_{f^{-1}(\ol w)}$.
  It is easy to see that one can define a Turing operator $\Lambda_\mf{F(\mf
    K)}$ computing this isomorphism. The Turing operators $\Lambda_\mf{F(\mf
  K)}$ and $\Lambda_{\mf K}$ will witness the effective isomorphism between
  $F\circ G$ and the identity on $F(\mf K)$, respectively, the effective isomorphism
  between $G\circ F$ and the identity on $\mf G$.
  
  It remains to show that the diagrams of \cref{def:effiso} commute and that
  for all $\A\in \mf K$ and $\mc G\in F(\mf K)$, 
  $F(\Lambda_\mf K^\A)=\Lambda_{F(\mf K)}^{F(\A)}$ and $G(\Lambda_{F(\mf K)}^\mc
  G)=\Lambda_\mf K^{G(\mc G)}$. For the commutation of the diagrams, say first
  that $\A,\hat{\A}\in\mf K$ with $\iota: \A\eemb \hat{ \A}$. Let $h:\A\ra F(\A)$ and
  $\hat h: \hat \A\ra F(\A)$ given by $h,\hat h:a\mapsto v_a$. We have not
  given an explicit definition of $F(\iota)$ yet. But notice that $F(\iota)$ is
  uniquely determined by the way it maps the elements $v_a$. In particular, if
  $\nu(x)=F(\iota)(x)$ on the elements with $a\edge x$, then $\nu=F(\iota)$.
  Thus we have that
\[ G(F(\iota))=\hat f^{-1}\circ F(\iota)\circ f=\hat f^{-1}\circ
\hat h \circ\iota\circ h^{-1}\circ f,\]
and $\Lambda_{\mf K}^\A=f^{-1}\circ h$, so
\[ \Lambda_\mf K^{\hat \A}\circ \iota=\hat f^{-1} \circ\hat h\circ
\iota=G(F(\iota))\circ \Lambda_\mf K^\A\] 
and thus $G\circ F$ is effectively isomorphic to $id_\mf K$. Now, say $\mc
G,\hat{\mc G}\in F(\mf K)$ with $\eta:\mc G\eemb \hat{\mc G}$. First let $x\in
\mc G$ with $a\edge x$. Let $h$, and $\hat h$ be as above, then $(\Lambda_{F(\mf K)}^{\hat{ \mc G}}\circ\eta)(x)=(\hat
h\circ\hat f^{-1}\circ\eta)(x)$ and $F(\hat f^{-1}\circ \eta\circ f)=\hat
h\circ\hat f^{-1}\circ\eta\circ f\circ h^{-1}$, so
\[(F(G(\eta))\circ\Lambda_{F(\mf K)}^\mc
  G)(x)=(F(\hat f^{-1}\circ \eta\circ f)\circ h\circ f^{-1})(x)=(\hat h\circ
\hat f^{-1}\circ \eta)(x).\]
Having established that the diagram commutes on the restricted universes we
use the fact that any embedding is determined by these parts of the universes
to obtain that $F\circ G$ is 
effectively isomorphic to $id_{F(\mf K)}$.

To verify the last condition in \cref{def:cbf} let $\A\in \mf K$, then on
$\{x:x\edge a\}$
\[ F(\Lambda_{\mf K}^\A)(x)=(\hat h\circ f^{-1}\circ h\circ h^{-1})(x)=(\hat h\circ
f^{-1})(x)=\Lambda_{F(\mf K)}^{F(\A)}(x)\]
and as there is a unique extension of this to a mapping $F(\A)\ra F(G(F(\A)))$
$F(\Lambda_{\mf K}^\A)=\Lambda_{F(\mf K)}^{F(\A)}$. At last, let $\mc G\in
F(\mf K)$, then 
\[ G(\Lambda^{\mc G}_{F(\mf K)})=\hat f^{-1}\circ h\circ f^{-1}\circ f=\hat
f^{-1}\circ h=\Lambda_\mf K^{G(\mc G)}\]
where $f$ is the enumeration of $\{x:x\edge a\}$ in $\mc G$ and $\hat f$ the
one in $G(F(\mc G))$.
\end{proof}
The following is a direct consequence of
\cref{prop:specpreserving,thm:cbfcomplete}.
\begin{corollary}\label{cor:graphscompletespectra}
  For every structure $\A$, there is a graph $\mc G_\A$ such that 
  \[ DgSp_\elbe(\A)=DgSp_\elbe(\mc G_\A).\]
\end{corollary}
Unfortunately, for the reduction from bi-embeddability on graphs to elementary
bi-embeddability on $\mf C$ given in \cref{sec:reduemb_geemb_c} we can not
deduce that $(\mf G,\emb)\leq_{CBF} (\mf C,\eemb)$. 
However, we can still establish a relationship between the degree spectra in
these classes. Recall that $\mc S_0$ is the substructure of $\mc S$ generated by the
constant string of $0$'s and $\mc S_1$ is the substructure generated by the
constant string of $1$'s.
\begin{lemma}\label{lem:jumpinv}
Let $X$ be $\Delta^0_2(Y)$ for some set $Y$. Then there exist
a sequence of structures $(\mc C_i)_{i\in\omega}$, uniformly computable in $Y$,
such that for all $i\in\omega$
\[ \mc C_i\cong\begin{cases} \mc S_0 & \text{if }i\in X\\ \S_1 &\text{if } i\not\in X\end{cases}.\]
\end{lemma}
\begin{proof}
  As $X$ is $\Delta^0_2$ there is an $X$-computable two valued function $f$ such that 
  \[ \lim_{s\ra \infty} f(i,s)=\begin{cases} 0 &\text{if } i\in X\\ 1&\text{if } i\not\in
  X\end{cases}.\]
  Define a structure $\C$ as follows. Fix an enumeration $g$ of $2^{<\omega}$. At stage $0$
  define $\C_0$ to be the partial structure containing one element $a$ on which
  no relation holds and leave all function symbols undefined. 
  Say we have defined the structure $\C_s$. At stage $s+1$ we look at $f(i,j)$
  for $j<s$ and define $\C_{s+1}$ as if $a$ was the finite string with
  $a(j)=f(j)$ for $j<s$. To be more precise:
  \begin{enumerate}
    \item For all $k$, if $k\leq s$ and $|g(k)|\leq s$ then let $R_{g(k)}(a)$
      if and only if $g(k)\preceq a$, and if $F_{g(k)}(a)$ has not been defined yet add
      a new element and set $F_{g(k)}(a)$.
    \item We may assume by induction that for all elements $b$ in $\C_{s+1}$
      there is $k\leq s$ such that $b=F_{g(k)}(a)$. We set $R_{g(l)}(b)$
      respecting this equation for all $l\leq s$.
  \end{enumerate}
  It is easy to see that this procedure yields a computable sequence of
  structures $\C_s$ with $\C_s\subseteq \C_{s+1}$ and a computable structure as
  its limit. We let $\C$ be this structure. $\C$ contains an element $a$ such
  that $\A\models R_\sigma(a)$ if and only if $a\preceq f(i,-)$ and all other
  elements are equal to $F_\tau(a)$ for some $\tau\in 2^{<\omega}$. Thus, in
  particular if $\lim f(i,s)=0$, then there is an element representing the
  constant string of $0$'s in $\A$ and otherwise there is an element representing
  the constant string of $1$'s in $\A$. Let $\C_i=\C$, then $\C_i\cong \S_0$ if and only if
  $i\in X$ and $\C_i\cong \S_1$ if and only if $i\not\in X$ as required.
\end{proof}
We use the usual category theoretic definition of \emph{pseudo-inverse}. Two
functors $F:\mf C\ra \mf D$ and $G:\mf D\ra\mf C$ are pseudo-inverses if
$F\circ G$ is naturally isomorphic to $id_{\mf D}$ and $G\circ F$ is naturally
isomorphic to $id_{\mf C}$. 

Recall that a structure $\A$ is automorphically trivial if there is
a finite set $D\subseteq A $ such that every permutation of $A$ that fixes $D$
pointwise is an automorphism. Knight~\cite{knight1986} showed that isomorphism
spectra of automorphically trivial structures contain only one Turing degree
and that the isomorphism spectra of automorphically non-trivial structures are upwards closed in the Turing degrees.
In~\cite{fokina2019a} the authors showed that if $\A$ is automorphically
trivial and $\B\biemb \A$, then $\B\cong \A$. Thus, as every bi-embeddability
and elementary bi-embeddability spectrum is a union of isomorphism spectra,
Knight's result carries over to this setting.
\begin{lemma}\label{lem:relationshipspectra}
  For every automorphically non-trivial structure $\mc G\in \mf G$ there is $\mc A\in \mf C$ such that
  \[DgSp_\elbe(\mc A)=\{X: X'\in DgSp_\biemb(\mc G)\}.\]
\end{lemma}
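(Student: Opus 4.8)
The plan is to take $\mc A=f(\mc G)$, the image of $\mc G$ under the reduction of \cref{sec:reduemb_geemb_c}, and prove the two inclusions separately. The guiding principle is that the isomorphism type of $f(\mc H)$ records the edge relation $E_{\mc H}$ of a graph $\mc H$ only ``at the limit'': each pair of vertices carries a copy of $\mc S_0$ or of $\mc S_1$ according to whether the pair is an edge, and—since $\mc S_0$ is generated by $\bar 0$ while $\mc S_1$ is generated by $\bar 1$—a component is $\mc S_0$ exactly when it contains an element $e$ with $R_{0^k}(e)$ for all $k$, and $\mc S_1$ exactly when it contains an $e$ with $R_{1^k}(e)$ for all $k$. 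Both conditions are $\Sigma^0_2$ in the diagram, and on a component known to be $\mc S_0$ or $\mc S_1$ they are complementary, so deciding which holds is $\Delta^0_2$. I therefore expect to isolate the computational identity
\[ DgSp_\cong(f(\mc H))=\{X: E_{\mc H}\leq_T X'\}\]
for every graph $\mc H$: the inclusion $\subseteq$ reads $E_{\mc H}$ off any copy with a single jump as above, while $\supseteq$ builds a copy from $X$ by feeding the $\Delta^0_2(X)$ relation $E_{\mc H}$ into \cref{lem:jumpinv} (with $Y=X$) to lay down the components, and then codes $X$ into one infinite definable part to force $D(\mc B)\equiv_T X$ rather than merely $\leq_T X$.

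Granting this identity, the inclusion $\supseteq$ of the lemma goes as follows. Suppose $X'\in DgSp_\biemb(\mc G)$, witnessed by $\mc H\biemb\mc G$ with $D(\mc H)\equiv_T X'$; then $E_{\mc H}\leq_T X'$, so the identity yields a copy $\mc B\cong f(\mc H)$ with $D(\mc B)\equiv_T X$. Since $\mc H\biemb\mc G$, \cref{lem:redupreserveemb} gives $f(\mc H)\elbe f(\mc G)$, hence $\mc B\elbe f(\mc G)$ and $X\in DgSp_\elbe(f(\mc G))$. For $\subseteq$, suppose $\mc B\elbe f(\mc G)$ with $D(\mc B)\equiv_T X$. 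I would first argue that $\mc B$ is forced into the shape of some $f(\mc H)$: using that $\mc S_0,\mc S_1$ are minimal and elementary equivalent and that $W$, $N$ and $O$ pin down the vertices and the assignment of a component to each pair, every $\mc B\in\mf C$ with $\mc B\elbe f(\mc G)$ is isomorphic to $f(\mc H)$ for a graph $\mc H$, and then $f(\mc H)\elbe f(\mc G)$ gives $\mc H\biemb\mc G$ via \cref{lem:redupreserveemb}. The identity now yields $E_{\mc H}\leq_T X'$, so $D(\mc H)\leq_T X'$; coding $X'$ into an $X'$-computable copy $\mc H^*\cong\mc H$ (possible since $\mc H$ has universe $\omega$) produces $\mc H^*\biemb\mc G$ with $D(\mc H^*)\equiv_T X'$, i.e.\ $X'\in DgSp_\biemb(\mc G)$.

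The main obstacle is the structural rigidity step used in $\subseteq$: showing that an arbitrary $\mc B\in\mf C$ that is merely elementary bi-embeddable with $f(\mc G)$ is actually isomorphic to some $f(\mc H)$, so that an edge relation can be extracted at all. This is precisely where the minimality of $\mc S_0$ and $\mc S_1$ and the Ehrenfeucht–Fra\"iss\'e analysis behind \cref{lem:redupreserveemb} must be leveraged to exclude copies with spurious or misattached components. The remaining ingredients—the two exact-degree coding arguments and the verification of the $\Delta^0_2$ bound on $E_{\mc H}$—I expect to be routine once the rigidity is in hand.
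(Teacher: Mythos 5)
Your proposal follows essentially the same route as the paper: the witness is $\mc A=f(\mc G)$; the edge relation of a preimage is read off any copy with one jump (the paper phrases this as the isomorphism types of $\mc S_0$ and $\mc S_1$ being $\Sigma^c_2$-definable, which matches your $\Sigma^0_2$ criterion via $R_{0^k}$ and $R_{1^k}$); and \cref{lem:jumpinv} is invoked exactly as you invoke it, to turn a $\Delta^0_2(X)$ edge relation into an $X$-computable copy. Your ``computational identity'' for $DgSp_\cong(f(\mc H))$ is a clean repackaging of what the paper expresses through the one-jump pseudo-inverse functor $G$ and Medvedev reducibility, and your explicit treatment of the exact-degree bookkeeping is if anything more careful than the paper's.

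However, the step you defer as ``the main obstacle'' is precisely the heart of the paper's proof, and it is closed by a short argument that you name the right tool for but never run: given $\mc B\elbe f(\mc G)$, fix an elementary embedding $h:\mc B\ra f(\mc G)$. For vertices $x,y$ of $\mc B$, the component $\mc S_{x,y}$ on $\{z: O(x,y,z)\}$ is definable with parameters $x,y$, so $h$ restricts to an elementary embedding of $\mc S_{x,y}$ into $\mc S_{h(x),h(y)}$; the latter is isomorphic to $\mc S_0$ or $\mc S_1$, and minimality forces the image of this restriction to be the whole structure, whence $\mc S_{x,y}\cong \mc S_0$ or $\mc S_1$. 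Combined with the first-order facts inherited from $f(\mc G)$ (every element is a vertex or lies in the component of exactly one pair of vertices), this yields $\mc B\cong f(\mc H)$ for the decoded graph $\mc H$, and $\mc H\biemb\mc G$ by \cref{lem:redupreserveemb}. Separately, one caveat applies to your write-up and, implicitly, to the paper's as well: the step ``code $X'$ into a copy $\mc H^*\cong\mc H$'' is not justified merely by $\mc H$ having universe $\omega$; it requires $\mc H$ to be non-automorphically-trivial (Knight's upward-closure theorem), and for degenerate $\mc G$ --- e.g.\ the complete graph, whose bi-embeddability class contains only computable structures --- this step, and indeed the stated equality itself, fails. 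So your argument is sound on the same domain where the paper's is, but the parenthetical justification you give for that coding step is not the correct one.
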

\begin{proof}
  Recall the reduction from embeddability on graphs to elementary embeddability
  on $\mf C$ given in \cref{sec:reduemb_geemb_c}. It is easy to see that it
  induces a computable functor $F: (\mf G, \biemb)\ra(\mf C,\elbe)$. We
  show that the functor has a pseudo-inverse $G$ on the
  $\elbe$-saturation of $F(\mf G)$ and then use \cref{lem:jumpinv} to obtain
  the lemma. The minimality of the submodels $\mc S$ used in the construction
  of $F$ will play a crucial role in the proof.

  Say $\B\elbe F(\mc G)$ for $\mc G\in \mf G$, that $x,y$ are vertices in $\B$
  and $\mc S_{(x,y)}$ is the substructure on the elements satisfying $O(x,y,-)$
  in the reduct to the language of $\mc S$.
  We have that either $\mc S_{(x,y)}\cong \mc S_0$ or $\mc S_{(x,y)}\cong \mc
  S_1$ since it elementary embeds into $\mc S_{(u,v)}$ for
  some $u,v\in F(\mc G)$ and $\mc S_{(u,v)}\cong \mc S_0$ or $\mc S_{(u,v)}\cong
  \mc S_1$ by minimality. Thus, we get a graph $G(\B)$ from $\B$ by defining an edge between
  two
  vertex variables $x,y$ from $\B$ if and only if $\mc S_{(u,v)}\cong \mc S_0$. Clearly
  every elementary embedding of $\B$ in $F(\mc G)$ yields an embedding of
  $G(\B)$ in $\mc G\cong G(F(\mc G))$ and the analogous fact is true for every elementary
  embedding of $F(\mc G)$ in $\B$. Likewise, we can argue that $F(G(\A))\cong
  \A$ for every $\A\in F(\mf G)$. Thus $G$ and $F$ are pseudo-inverses.
  
  However, notice that $G$ is not effective. Within one jump over the diagram
  of any $\B\in F(\mf G)$ we can compute $G(\mc
  B)$ as the isomorphism types of $\mc S_1$ and $\mc S_0$ are definable by
  $\Sigma^c_2$ formulas in $\mf C$. This implies that for all
  $\A\in \mf G$, $F(\A)'\geq_T G(F(\A))\cong \A$. So, in particular, 
  \begin{equation}\label{eq1}
    DgSp_\elbe(F(\A))\geq_s
  \{X: X'\in DgSp_\biemb(G(F(\mc A)))=DgSp_\biemb(\A)\}.\end{equation}
On the other hand, let $X\in DgSp_\biemb(\A)$ and $\hat \A\biemb \A$ such that
$\hat\A\equiv_T X$. Then by \cref{lem:jumpinv} for every $Y$ with $Y'\geq_T X$,
there is $\B\cong F(\hat\A)$ with $\B\equiv_T Y$. This process is uniform in
$\B$ and $Y$. Thus
\begin{equation}\label{eq2} DgSp_\elbe(F(\A))\leq_s \{ X: X'\in
DgSp_\biemb(\A)\}.\end{equation}
As both bi-embeddability and elementary bi-embeddability spectra of
automorphically non-trivial structures are upwards closed, \cref{eq1,eq2} imply
that
\[ DgSp_\elbe(F(\A))=\{ X: X'\in DgSp_\biemb(\A)\}.\]
\end{proof}
\cref{thm:dgsp} follows directly from
\cref{lem:relationshipspectra} and \cref{cor:graphscompletespectra}.

We note that \cref{thm:dgsp} may not be optimal. Using a different proof one might
be able to get an even stronger relationship between the spectra realized by
bi-embeddability on graphs and elementary bi-embeddability on graphs. We thus
ask.
\begin{question}
  Is every bi-embeddability spectrum of a graph the elementary bi-embeddability
  spectrum of a graph and vice versa?
\end{question}
One way to answer this question positively is by showing that if $X$ is an
elementary bi-embeddability spectrum then so is $X'=\{ x': x\in X\}$. This is true for
isomorphism spectra and usually shown by considering an appropriate definition
for the jump of a structure. However, all known definitions do not
preserve elementary embeddability (and not even elementary equivalence). We
  thus ask.
\begin{question}
  Let $X$ be the elementary bi-embeddability spectrum of a graph. Is $X'$ the
  elementary bi-embeddability spectrum of a graph?
\end{question}
\begin{question}
  Let $X$ be the theory spectrum of a graph. Is $X'$ the theory spectrum of
  a graph?
\end{question}

Also, while \cref{thm:cbfcomplete} shows that graphs are complete for
elementary bi-embeddability spectra, it is unknown whether the same is true for
bi-embeddability.
\begin{question}
  Is every bi-embeddability spectrum of a structure realized as the
  bi-embeddability spectrum of a graph?
\end{question}
\printbibliography
\end{document}